\newtheorem{theorem}{Theorem}
\newtheorem{lemma}[theorem]{Lemma}
\newcommand{\HH}{\mathbb H}
\newcommand{\R}{{\mathbb R}}
\newcommand{\p}{\partial}
\def\dt{\partial_t}
\def\Ac{\mathcal A }
\def\Bc{\mathcal B }
\def\Fc{\mathcal F }
\def\v{{\bf v}}
\def\B{{\bf B}}
\def\H{{\bf H}}
\def\E{{\bf E}}
\def\Lb{{\bf L}}
\def\Rb{{\bf R}}
\def\Sb{{\bf S}}
\def\ds{\displaystyle}
\def\eps{\varepsilon}
\def\vphi{\varphi}
\def\vpu{\varphi^{(1)}}
\def\vpd{\varphi^{(2)}}
\def\hvpu{\hat\varphi^{(1)}}
\def\hvpd{\hat\varphi^{(2)}}
\def\la{\lambda}
\def\a{\alpha}
\def\b{\beta}
\def\l{\ell}
\def\La{\Lambda}
\def\sla{\sigma(\lambda)}
\def\th{\theta}
\def\sgn{\rm sgn}
\def\intR{\int_{-\infty}^{+\infty}}
\def\intRneg{\int_{-\infty}^{0}}
\DeclareMathOperator{\dv}{{div}}
\DeclareMathOperator{\rot}{{\nabla \times\, }}
\renewcommand*{\doteq }{\vcentcolon=} 
\title[Plasma-vacuum interface]{Nonlinear surface waves\\ on the plasma-vacuum interface}
\author[p. secchi]{Paolo Secchi}
\subjclass[2010]{Primary: 76W05; Secondary: 35Q35, 35L50, 76E17, 76E25, 35R35, 76B03}
\keywords{Incompressible Magneto-Hydrodynamics, Maxwell equations, plasma-vacuum interface}
\address{DICATAM, Mathematical Division, University of Brescia \\ Via Branze 43, 25123 Brescia, Italy}
\email{paolo.secchi@unibs.it}
\thanks{The author is supported by the national research project PRIN 2012 \lq\lq Nonlinear Hyperbolic Partial Differential Equations, Dispersive and Transport Equations: theoretical and applicative aspects\rq\rq.}
\date{\today}
\begin{document}

\begin{abstract}
In this paper we study the propagation of weakly nonlinear surface waves on a plasma-vacuum interface. In the plasma region we consider the equations of incompressible magnetohydrodynamics, while in vacuum the magnetic and electric fields are governed by the Maxwell equations. A surface wave propagate along the plasma-vacuum interface, when it is linearly weakly stable. 

Following the approach of \cite{ali-hunter}, we measure the amplitude of the surface wave by the normalized displacement of the interface in a reference frame moving with the linearized phase velocity of the wave, and obtain that it satisfies an asymptotic nonlocal, Hamiltonian evolution equation. We show the local-in-time existence of smooth solutions to the Cauchy problem for the amplitude equation in noncanonical variables, and we derive a blow up criterion.

\end{abstract}

\maketitle

\section{Introduction}
\label{sect1}

 {Plasma-vacuum interface problems appear in the mathematical modeling of plasma confinement by magnetic fields in thermonuclear energy production (as in Tokamaks; see, e.g., \cite{Goed}). In this model, the plasma is confined inside a perfectly conducting rigid wall and isolated from it by a region containing very low density plasma, which may qualify as vacuum, due to the effect of strong magnetic fields.
In Astrophysics, the plasma-vacuum interface problem can be used for modeling the motion of a star or the solar corona when magnetic fields are taken into account. 

This subject is very popular since the 1950--70's, but most of theoretical studies are devoted to finding stability criteria of equilibrium states. The typical work in this direction is the famous paper of Bernstein et al. \cite{BFKK}, where the plasma-vacuum interface problem is considered in its classical statement modeling the plasma confined inside a perfectly conducting rigid wall and isolated from it by a vacuum region. 

Assume that the plasma-vacuum interface is described by $\Gamma (t)=\{F(t,x)=0\}$, and that 
$\Omega^\pm(t)=\{F(t,x)\gtrless 0\}$ are the space-time domains occupied by the plasma and the vacuum respectively. Since $F$ is an unknown, this is a free-boundary problem.

In \cite{BFKK} (see also \cite{Goed}) the plasma is described by the  equations of ideal compressible Magneto-Hydro-dynamics (MHD)\footnote{In this introduction we don't write explicitly the compressible MHD equations that are not really needed, as in the sequel we are going to consider the incompressible MHD equations.}, whereas in the vacuum region one considers the so-called {\it pre-Maxwell dynamics}}
\begin{equation}
\nabla \times {H} =0,\qquad {\rm div}\, {H}=0,\label{6}
\end{equation}
\begin{equation}
\nabla \times {E} =- \frac1{c}\partial_t{H},\qquad {\rm div}\, E=0,\label{6'}
\end{equation}
describing the vacuum magnetic field ${H}\in\mathbb{R}^3$ and electric field ${E}\in\mathbb{R}^3$; $c$ is the speed of light. That is in the Maxwell equations one neglects the displacement current $(1/c)\,\partial_tE$.
From \eqref{6'} the electric field $E$ is a secondary variable that may be  computed from the magnetic field ${ H}$.

The dependent variables in the plasma region $\Omega^+(t)$ and in the vacuum region $\Omega^-(t)$ (i.e. the solution $H$ of \eqref{6}) are linked at the free interface by the boundary conditions
\begin{subequations}\label{7}
\begin{align}
\frac{{\rm d}F }{{\rm d} t}=0,\quad 
 [q]=0,\quad  B\cdot N=0,\label{8a}
 \\  H\cdot N=0 \label{8b}
\end{align}
 \end{subequations}
on $\Gamma (t)$, where $B\in\mathbb{R}^3$ denotes the magnetic field in the plasma region, $[q]$ denotes the jump of the total pressure across the interface, and $N=\nabla F$. The
first condition in \eqref{8a} (where $\frac{{\rm d}\, }{{\rm d} t}$ denotes the material derivative) means that the interface moves with the velocity of plasma particles at the boundary.

An important feature of the plasma-vacuum interface problem is that the uniform Kreiss-Lopatinskii condition \cite{kreiss} is never satisfied. The Kreiss-Lopatinskii condition may be violated, because there are modes that grow arbitrarily fast, and the interface is violently unstable as in the Kelvin-Helmholtz instability of a vortex sheet. Alternatively the Kreiss-Lopatinskii condition may be satisfied in weak form, and the interface is weakly but not strongly stable. In that case surface waves propagate along the discontinuity front.

Another important difficulty of the plasma-vacuum problem is that we cannot test the Kreiss-Lopatinski condition analytically, as for other free-boundary problems in MHD, so it is not known a complete description of the parameters set of violent instability / weak stability. Moreover, since the number of dimensionless parameters for the constant coefficients linearized problem is big, a complete numerical test of the Kreiss-Lopatinski condition seems unrealizable in practice. Thus it becomes important to investigate in a different way which stability conditions may ensure the weak stability of the problem.

Until recently, there were no well-posedness results for full ({\it non-stationary}) plasma-vacuum models. A basic a priori energy estimate for solutions of the linearized plasma-vacuum problem was first derived in \cite{trakhinin10}, under the stability condition stating that the magnetic fields, respectively $B$ and $H$, on either side of the interface are not collinear, i.e.
\begin{equation}
\begin{array}{ll}\label{stability}
B\times H\not=0 \qquad \mbox{on}\; \Gamma(t).
\end{array}
\end{equation}
The existence of solutions to the linearized problem was then proved in \cite{SeTr}. In \cite{mttpv} similar results are obtained for the plasma-vacuum problem in incompressible MHD.

In \cite{SeTr, trakhinin10}, for technical simplicity the moving interface $\Gamma (t)$ was assumed to have the form of a graph $F(t,x)=x_2-\varphi(t,x_1,x_3)$, i.e., both the plasma and vacuum domains are unbounded.
However, as was noted in the subsequent paper \cite{SeTrNl}, such form of the domains is not suitable for the original nonlinear free boundary problem because in that case the vacuum region $\Omega^-(t)$ is a simply connected domain. Indeed, in a simply connected domain the homogeneous elliptic problem \eqref{6}, \eqref{8b} has only the trivial solution $H=0$, and the whole problem is reduced to solving the MHD equations with a vanishing total pressure $q$ on $\Gamma (t)$.
The technically difficult case of multiply connected vacuum regions was postponed to a future work. 

Instead of this, in \cite{SeTrNl} the plasma-vacuum system is assumed to be not isolated from the outside world due to a given surface current on the fixed boundary of the vacuum region that forces oscillations. In laboratory plasmas this external excitation may be caused by a system of coils. This model can also be exploited for the analysis of waves in astrophysical plasmas, e.g., by mimicking the effects of excitation of MHD waves by an external plasma by means of a localized set of \lq\lq coils", when the response of the internal plasma is the main issue (see a more complete discussion in \cite{Goed}).

Under the above mentioned stability condition \eqref{stability}, in \cite{SeTrNl} the authors prove the local-in-time existence of a smooth solution in suitable anisotropic Sobolev spaces to the nonlinear plasma-vacuum interface problem, with the proof based on the results of \cite{SeTrNl} for the linearized problem, and a suitable Nash-Moser-type iteration. The stability condition $B\times H\not=0$ on $\Gamma(t)$ is assumed at time $t=0$ for the initial data and it is shown to persist for small positive time.

As in the classical formulation of the plasma-vacuum problem with the pre-Maxwell dynamics the displacement current is neglected and \eqref{6'} is considered {\it a posteriori} to recover the electric field from the magnetic field, the influence of the electric field is somehow hidden in the model. In order to investigate the influence of the vacuum electric field on the well-posedness of the problem, in \cite{CDAS,mandrik-trakhinin}, instead of the pre-Maxwell dynamics, in the vacuum region the authors don't neglect the displacement current and consider the complete system of {\it Maxwell equations} for the electric and the magnetic fields.

Indeed, for the relativistic plasma-vacuum problem, Trakhinin \cite{trakhinin12} has shown the possible ill-posedness in the presence of a sufficiently strong vacuum electric field. Since relativistic effects play a rather passive role in the analysis of \cite{trakhinin12}, it is natural to expect the same for the nonrelativistic problem.
In \cite{CDAS,mandrik-trakhinin} the authors show that a {\it sufficiently weak} vacuum electric field, under the same stability condition \eqref{stability}, precludes ill-posedness and gives the well-posedness of the linearized problem.

In this paper we are interested to investigate the well-posedness of the problem when \eqref{stability} is violated, i.e. when the magnetic fields on either side of the interface are collinear. For the sake of simplicity we consider the plasma-vacuum interface problem in two-dimensions, with the coupling of the incompressible MHD equations in the plasma region and the Maxwell equations in the vacuum region. The solution is close to a stationary basic state with parallel magnetic fields at the flat interface.



To study the time evolution of the plasma-vacuum interface we follow the approach of  \cite{ali-hunter} and
we show that, in a unidirectional surface wave, the normalized displacement $x_2=\vphi(t,x_1)$ of a weakly stable surface wave along the interface, in a reference frame moving with the linearized phase velocity of the wave, satisfies the quadratically nonlinear, nonlocal asymptotic equation 
\begin{equation}
\begin{array}{ll}\label{equ1}
\vphi_t+\frac12 \HH[\Phi^2]_{xx} + \Phi\vphi_{xx}=0, \qquad \Phi=\HH[\vphi] \, .
\end{array}
\end{equation}
Here $\HH$ denotes the Hilbert transform defined by
\begin{equation*}
\begin{array}{ll}\label{}
\ds \HH[\vphi](x)=\frac1{\pi}\,\mbox{p.v.}\intR\frac{\vphi(y)}{x-y}\, dy, 
\end{array}
\end{equation*}
and such that
\[
\HH[e^{ikx}]=
-i \, \sgn(k)\, e^{ikx}\, , \qquad \Fc[\HH[\vphi]]=i \, \sgn(k)\, \Fc[\vphi],
\]
for $\Fc$ denoting the Fourier transformation.
%
%
Equation \eqref{equ1} coincides with the amplitude equation for nonlinear Rayleigh waves \cite{hamilton-et-al} and current-vortex sheets in incompressible MHD \cite{ali-hunter,ali-hunter-parker}. It is interesting that exactly the same equation appears for the incompressible plasma-vacuum interface problem, where in the vacuum part the electric and magnetic fields are ruled by the Maxwell equations. Equation \eqref{equ1} also admits the other following spatial form
\begin{equation}
\begin{array}{ll}\label{equ1bis}
\vphi_t+[ \HH,\Phi] \Phi_{xx} + \HH[\Phi^2_x]=0 \, ,
\end{array}
\end{equation}
where $[ \HH,\Phi]  $ is the commutator of $\HH$ with multiplication by $\Phi$, see \cite{hunter11}. This form of \eqref{equ1bis} shows that there is a cancelation of the second order spatial derivatives appearing in \eqref{equ1}.

By adapting the proof of \cite{hunter2} we show the local-in-time existence of smooth solutions to the Cauchy problem for amplitude equation in noncanonical variables, and we derive a blow up criterion. Numerical computations \cite{ali-hunter, hamilton-et-al} show that solutions of \eqref{equ1} form singularities in which the derivative $\vphi_x$ blows up, but $\vphi$ appears to remain continuous. As far as we know, the global existence of appropriate weak solutions is an open question.

The paper is organized as follows. In Sec. \ref{pbm} we formulate the plasma-vacuum problem for incompressible MHD equations in the plasma region, Maxwell equations in the vacuum region and suitable jump conditions on the free interface. In Sec. \ref{asymptotic} we introduce the asymptotic expansion for small-amplitude, long-time weakly nonlinear surface waves.
In Sec. \ref{first order} we solve the equations for the first order term of the asymptotic expansion. This first order solution depends on an arbitrary wave profile function. In Sec. \ref{second order} we solve the second order perturbation equations. When the second order solution of the interior equations is substituted in the second order jump conditions, one gets a linear system whose resolution is obtained under solvability conditions leading to the amplitude equation \eqref{equ1}. The arbitrary wave profile function of Sec. \ref{first order} is then determined as the solution of this amplitude equation.
The results of Sections \ref{asymptotic} to \ref{second order} are summarized in Theorem \ref{summa}.
In Sec. \ref{noncanonical} we prove the local in time existence of a smooth solution of an initial value problem for a noncanonical form of \eqref{equ1}, see Theorem \ref{wellp}, and derive a blow up criterion, see Lemma \ref{blow}.
 

\section{The plasma-vacuum interface problem}\label{pbm}

We consider the equations of incompressible magneto-hydrodynamics (MHD), i.e. the equations governing the motion 
of a perfectly conducting inviscid incompressible plasma. In the case of a homogeneous plasma (the density $\rho 
\equiv$ const $>0$), the equations in a dimensionless form read:
\begin{equation}
\label{mhd1}
\begin{cases}
\partial_t \v +\nabla \cdot 
(\v \otimes \v-\B\otimes \B) +\nabla q =0 \, ,\\
\partial_t \B -\nabla \times
(\v\times \B) =0 \, ,\\
\dv \v=0\,, \; \dv \B=0 \, ,
\end{cases}
\end{equation}
where $\v $ denotes the plasma velocity, $\B$ is the magnetic field (in Alfv\'en velocity 
units), $q=p+|\B|^2/2$ is the total pressure, $p$ being the pressure.

For smooth solutions, system \eqref{mhd1} can be written in equivalent form as a symmetric system
\begin{equation}
\label{mhd2}
\begin{cases}
\dt \v +(\v\cdot\nabla)\v -(\B\cdot\nabla)\B  +\nabla q=0\, ,\\
\dt \B +(\v\cdot\nabla)\B -(\B\cdot\nabla)\v  =0\, ,\\
\dv \v=0 \, .
\end{cases}
\end{equation}
In addition the magnetic field must satisfy the constraint 
\[
 \dv \B=0\,,
\]
which is preserved by the evolution in time if it is satisfied by the initial data.

Let $\Omega^+(t)$ and $\Omega ^-(t)$ be space-time domains occupied by the plasma and the vacuum respectively, separated by an interface $\Gamma (t)$. That is, in the domain
$\Omega^+(t)$ we consider system \eqref{mhd2} governing the motion of the plasma and in the domain $\Omega^-(t)$  we have the Maxwell system
\begin{equation}\label{eq:Maxwell}
\begin{cases}
\nu\p_t \H + \rot \E = 0  \,, \\
\nu\p_t \E - \rot \H = 0  \,, 
\end{cases}
\end{equation}
describing the vacuum magnetic and electric fields $\H,\E\in\mathbb{R}^3$. 
Here, the equations are written in nondimensional form through a suitable scaling (see Mandrik--Trakhinin~\cite{mandrik-trakhinin}), and $\nu=\frac{\bar{v}}{c}$, where $\bar{v}$ is the velocity of a uniform flow and $c$ is the speed of light in vacuum. If we choose $\bar{v}$ to be the speed of sound in vacuum, we have that $\nu$ is a small, even though fixed parameter.
System \eqref{eq:Maxwell} is supplemented by the divergence constraints
\begin{equation*}
 \dv \H = \dv \E = 0
\label{}
\end{equation*}
on the initial data.
The plasma variables are connected with the vacuum magnetic and electric fields on the interface $\Gamma (t)$
through the relations \cite{BFKK,Goed}
\begin{equation}
\sigma = \v\cdot N , \quad[q]=0,\quad  \B\cdot N= \H\cdot N=0 ,\quad  N \times \E = \nu(\v \cdot N)  \H\quad \mbox{on}\ \Gamma (t),
\label{8}
\end{equation}
where $\sigma$ denotes the velocity of 
propagation of the interface $\Gamma (t)$,
$N$ is a normal vector and $[q]= q|_{\Gamma}-\frac{1}{2}|{\H}|^2_{|\Gamma}+\frac12|\E|^2_{|\Gamma}$ is the jump of the total pressure across the interface.

We consider the case of two space dimensions and write
\[
\v=(v_1,v_2)^T, \qquad \B=(B_1,B_2)^T.
\]
In the (three-dimensional) Maxwell equations \eqref{eq:Maxwell} we assume that 
\[
\H=(H_1,H_2,0)^T, 
\]
and that there is no dependence of $\H$ on the third space variable $x_3$. It follows from \eqref{eq:Maxwell} that $\E$ takes the form
\[
 \E=(0,0,E)^T,
\]
and the Maxwell equations
reduce to 
\begin{equation}\label{eq:Maxwell2}
\begin{cases}
\nu\p_t H_1 + \p_2E = 0  \,, \\
\nu\p_t H_2 - \p_1E = 0  \,, \\
\nu\p_t E - \p_1 H_2 +\p_2 H_1 = 0  \,, 
\end{cases}
\end{equation}
under the constraint
\begin{equation*}
\p_1H_1+\p_2H_2 =  0
\label{}
\end{equation*}
on the initial data. From now on we write 
\[
\H=(H_1,H_2)^T, 
\]
hoping that this small abuse of notation will create no confusion for the reader.


Let us assume that the moving interface $\Gamma(t)$ takes the form $$
\Gamma(t) \doteq  \{ (x_1,x_2) \in \R^2 \, , \, x_2=\zeta(x_1,t)\} \, ,
$$
where $t \in [0,T]$. Then we have $\Omega^\pm(t)=\{x_2\gtrless \zeta(x_1,t)\}$.
With our parametrization of $\Gamma (t)$,  the boundary conditions \eqref{8} at the interface reduce to
\begin{equation}
\begin{array}{ll}
\label{bc}
\partial_t\zeta =\v\cdot N\, ,\quad q=\frac{1}{2}\left( H_1^2+H_2^2-E^2\right)\, ,\\
\\
\B\cdot N=0\, , \quad \H\cdot N=0 \, ,
\quad
 E-\nu\zeta_tH_1=0 \quad \mbox{on}\quad \Gamma (t)\, ,

\end{array}
\end{equation}
where $N=(-\partial_1\zeta ,1 )$.

A stationary solution of \eqref{mhd2}, \eqref{eq:Maxwell2}, \eqref{bc} with interface located at $\{x_2=0\}$ is given by the constant states
\[
\v^0=(v_1^0,0)^T, \qquad \B^0=(B_1^0,0)^T,
\]
\[
\H^0=(H_1^0,0)^T, \qquad E^0=0, \qquad q^0=\frac{1}{2} (H_1^0)^2.
\]
We will consider the propagation of surface waves that are localized near the interface. The corresponding solutions must satisfy the decay conditions
\begin{equation}
\begin{array}{ll}\label{decay1}
\ds \lim_{x_2\to+\infty}(\v,\B,q)=U^0\doteq(v_1^0,0,B_1^0,0,q^0)\, , \\
\\
\ds \lim_{x_2\to-\infty}(\H,E)=V^0\doteq(H_1^0,0,0)\, .
\end{array}
\end{equation}

\section{The asymptotic expansion}\label{asymptotic}

As in \cite{ali-hunter} we suppose that the perturbed interface has a slope of the order $\eps$, where $\eps$ is a small parameter. With respect to dimensionless variables in which the wavelength of the perturbation and the velocity of the surface wave are of the order one, the time scale for quadratically nonlinear effects to significantly alter the wave profile is of the order $\eps^{-1}$. We therefore introduce a \lq\lq slow\rq\rq time variable $\tau=\eps t$. We also introduce a spatial variable $\th=x_1-\la t$ in a reference frame moving with the surface wave. Here, $\la$ is the linearized phase velocity of the wave, which we will determine as part of the solution.

We write the perturbed location of the interface as 
\[
x_2=\eps\vphi(\th,\tau;\eps),
\]
and define a new independent variable
\[
\eta=x_2-\eps\vphi(\th,\tau;\eps),
\]
so that the perturbed interface is located at $\eta=0$. We look for an asymptotic expansion of the solution $U=(\v,\B,q)^T,\ V=(\H,E)^T$ and $\vphi$ as $\eps\to0$ of the form 
\begin{equation}
\begin{array}{ll}\label{expansion}
U(\th,\eta,\tau;\eps)=U^0+\eps U^{(1)}(\th,\eta,\tau)+\eps^2 U^{(2)}(\th,\eta,\tau) + O(\eps^3), \qquad  \eta>0,
\\
V(\th,\eta,\tau;\eps)=V^0+\eps V^{(1)}(\th,\eta,\tau)+\eps^2 V^{(2)}(\th,\eta,\tau) + O(\eps^3), \qquad  \eta<0,
\\
\vphi(\th,\tau;\eps)= \vphi^{(1)}(\th,\tau)+\eps \vphi^{(2)}(\th,\tau) + O(\eps^2).

\end{array}
\end{equation}
We expand the partial derivatives with respect to the original time and space variables as
\begin{equation*}
\begin{array}{ll}\label{}
\p_t=-\la\p_\th+\eps(\p_\tau+\la\vphi_\th\p_\eta)-\eps^2\vphi_\tau\p_\eta,\\
\p_{x_1}=\p_\th-\eps\vphi_\th\p_\eta,\\
\p_{x_2}=\p_\eta.
\end{array}
\end{equation*}
We substitute these expansions in \eqref{mhd2}, \eqref{eq:Maxwell2}, Taylor expand the result with respect to $\eps$ and equate coefficients of $\eps^1$ and $\eps^2$ to zero. In the interior the asymptotic solution satisfies at the first order
\begin{equation}
\begin{cases}\label{equ1order}
(\la-v^0_1)\p_\th v^{(1)} + B^0_1\p_\th B^{(1)}-\begin{pmatrix}
 \p_\th \\
 \p_\eta
\end{pmatrix}q^{(1)}=0\, ,\\
(\la-v^0_1)\p_\th B^{(1)} + B^0_1\p_\th v^{(1)}=0\, ,\\
\p_\th v_1^{(1)}+\p_\eta v_2^{(1)}=0\, ,&\qquad\mbox{for }  \eta>0,\\
\end{cases}
\end{equation}
\begin{equation}
\begin{cases}\label{equ12order}
\nu\la\p_\th H_1^{(1)}-\p_\eta E^{(1)}=0\, ,\\
\nu\la\p_\th H_2^{(1)}+\p_\th E^{(1)}=0\, ,\\
\nu\la\p_\th E^{(1)}+\p_\th H_2^{(1)}-\p_\eta H_1^{(1)}=0\, ,&\qquad\mbox{for }  \eta<0.
\end{cases}
\end{equation}
We expand the jump conditions in \eqref{bc}, with $\zeta=\eps\vphi$, and equate coefficients of $\eps^1$ and $\eps^2$ to zero. We find that the solutions satisfy at the first order the following jump conditions
\begin{equation}
\begin{cases}\label{bc1order}
\ds (\la-v^0_1)\p_\th \vpu + v_2^{(1)}=0\, ,\\
\ds B^0_1\p_\th \vpu - B_2^{(1)}=0\, , \qquad
H^0_1\p_\th \vpu - H_2^{(1)}=0\, ,\\
\ds q^{(1)}=H_1^0H_1^{(1)}\, , \qquad
E^{(1)}+\nu\la H_1^0\p_\th \vpu=0 \, ,&\qquad\mbox{for }  \eta=0.
\end{cases}
\end{equation}
At the second order we obtain
\begin{equation}
\begin{cases}\label{equ2order}
(\la-v^0_1)\p_\th v^{(2)} + B^0_1\p_\th B^{(2)}-\begin{pmatrix}
 \p_\th \\
 \p_\eta
\end{pmatrix}q^{(2)}=p_1\, ,\\
(\la-v^0_1)\p_\th B^{(2)} + B^0_1\p_\th v^{(2)}=p_2\, ,\\
-\p_\th v_1^{(2)}-\p_\eta v_2^{(2)}=p_3\, ,&\qquad\mbox{for }  \eta>0,\\
\end{cases}
\end{equation}
\begin{equation}
\begin{cases}\label{equ22order}
\nu\la\p_\th H_1^{(2)}-\p_\eta E^{(2)}=p_1'\, ,\\
\nu\la\p_\th H_2^{(2)}+\p_\th E^{(2)}=p_2'\, ,\\
\nu\la\p_\th E^{(2)}+\p_\th H_2^{(2)}-\p_\eta H_1^{(2)}=p_3'\, ,&\qquad\mbox{for }  \eta<0,
\end{cases}
\end{equation}
and the jump conditions
\begin{equation}
\begin{cases}\label{bc2order}
\ds (\la-v^0_1)\p_\th \vpd + v_2^{(2)}=r_1\, ,\\
\ds B^0_1\p_\th \vpd - B_2^{(2)}=r_2\, , \qquad
H^0_1\p_\th \vpd - H_2^{(2)}=r_3\, ,\\
\ds q^{(2)}-H_1^0H_1^{(2)}=r_4\, , \qquad
E^{(2)}+\nu\la H_1^0\p_\th \vpd=r_5 \, ,&\qquad\mbox{for }  \eta=0,
\end{cases}
\end{equation}
where we have denoted
\begin{equation*}
\begin{array}{ll}\label{}
p_1\doteq (\p_\tau+\la\vpu_\th\p_\eta)v^{(1)} + (v_1^{(1)}\p_\th+v_2^{(1)}\p_\eta-v_1^0\vpu_\th\p_\eta)v^{(1)}\\
\quad\qquad - (B_1^{(1)}\p_\th+B_2^{(1)}\p_\eta-B_1^0\vpu_\th\p_\eta)B^{(1)} - \begin{pmatrix}
\vpu_\th\p_\eta q^{(1)} \\
0 
\end{pmatrix}\, ,
\\
p_2\doteq (\p_\tau+\la\vpu_\th\p_\eta)B^{(1)} + (v_1^{(1)}\p_\th+v_2^{(1)}\p_\eta-v_1^0\vpu_\th\p_\eta)B^{(1)}\\
\quad\qquad - (B_1^{(1)}\p_\th+B_2^{(1)}\p_\eta-B_1^0\vpu_\th\p_\eta)v^{(1)} \, ,
\\
p_3\doteq -\vpu_\th\p_\eta v^{(1)}_1 \, ,
\end{array}
\end{equation*}
\begin{equation*}
\begin{array}{ll}\label{}
p_1'\doteq \nu(\p_\tau+\la\vpu_\th\p_\eta)H_1^{(1)} \, ,
\qquad
p_2'\doteq \nu(\p_\tau+\la\vpu_\th\p_\eta)H_2^{(1)} + \vpu_\th\p_\eta E^{(1)} \, ,
\\
p_3'\doteq \nu(\p_\tau+\la\vpu_\th\p_\eta)E^{(1)} + \vpu_\th\p_\eta H^{(1)}_2 \, ,
\end{array}
\end{equation*}
\begin{equation*}
\begin{array}{ll}\label{}
r_1\doteq (\p_\tau+v_1^{(1)}\p_\th)\vpu \, ,
\qquad
&r_2\doteq -B_1^{(1)}\p_\th \vpu \, ,
\\
r_3\doteq -H_1^{(1)}\p_\th \vpu \, ,
\qquad
&\ds r_4\doteq \frac1{2}\left( |H^{(1)}|^2 - (E^{(1)})^2\right) \, ,
\\
r_5\doteq -\nu\la H_1^{(1)}\p_\th \vpu + \nu H_1^0\p_\tau\vpu\, .
\end{array}
\end{equation*}
In the rest of the paper we solve equations \eqref{equ1order}--\eqref{bc2order}.

\section{The first order equations}\label{first order}

Introducing the Fourier transforms
\[
\hat{U}^{(1)}(k,\eta,\tau)=\frac1{2\pi}\intR U^{(1)}(\th,\eta,\tau)e^{-ik\th}d\th,
\]
\[
\hat{V}^{(1)}(k,\eta,\tau)=\frac1{2\pi}\intR V^{(1)}(\th,\eta,\tau)e^{-ik\th}d\th,
\]
\[
\hat{\vphi}^{(1)}(k,\tau)=\frac1{2\pi}\intR \vphi^{(1)}(\th,\tau)e^{-ik\th}d\th,
\]
and Fourier transforming \eqref{equ1order}--\eqref{bc1order} with respect to $\th$, we find the equations
\begin{equation}
\begin{cases}\label{equ1orderFourier}
(\la-v^0_1)ik\hat v^{(1)} + ik B^0_1\hat B^{(1)}-\begin{pmatrix}
ik \\
 \p_\eta
\end{pmatrix}\hat q^{(1)}=0\, ,\\
(\la-v^0_1)ik \hat B^{(1)} + ik B^0_1\hat v^{(1)}=0\, ,\\
ik\hat v_1^{(1)}+\p_\eta\hat v_2^{(1)}=0\, ,&\qquad\mbox{for }  \eta>0,\\
\end{cases}
\end{equation}
\begin{equation}
\begin{cases}\label{equ12orderFourier}
\nu\la ik\hat H_1^{(1)}-\p_\eta\hat E^{(1)}=0\, ,\\
\nu\la ik\hat H_2^{(1)}+ik\hat E^{(1)}=0\, ,\\
\nu\la ik\hat E^{(1)}+ik\hat H_2^{(1)}-\p_\eta\hat H_1^{(1)}=0\, ,&\qquad\mbox{for }  \eta<0,
\end{cases}
\end{equation}
\begin{equation}
\begin{cases}\label{bc1orderFourier}
\ds (\la-v^0_1)ik \hvpu + \hat v_2^{(1)}=0\, ,\\
\ds ikB^0_1 \hvpu - \hat B_2^{(1)}=0\, , \qquad
ikH^0_1 \hvpu - \hat H_2^{(1)}=0\, ,\\
\ds \hat q^{(1)}=H_1^0\hat H_1^{(1)}\, , \qquad
\hat E^{(1)}+\nu\la ikH_1^0 \hvpu=0 \, ,&\qquad\mbox{for }  \eta=0.
\end{cases}
\end{equation}
Let us first consider problem \eqref{equ1orderFourier}, that we write in the form\footnote{The choice of the symmetric form of equations \eqref{mhd2}, rather than the conservative form \eqref{mhd1} as in \cite{ali-hunter}, reflects in a different definition of the matrices $\Ac,\Bc$, and partly simplifies the following resolution.}
\begin{equation}
\begin{array}{ll}\label{equ1orderFourier2}
ik\Ac \hat{U}^{(1)}+\Bc\p_\eta \hat{U}^{(1)}=0,
\end{array}
\end{equation}
where the real symmetric matrices $\Ac,\Bc$ are defined by
\[
\Ac=\begin{pmatrix}
\la-v^0_1 & 0 &B^0_1 & 0 & -1\\
0 & \la-v^0_1& 0 & B^0_1 & 0 \\
B^0_1 & 0 & \la-v^0_1  & 0 & 0\\
0 & B^0_1 & 0 & \la-v^0_1  & 0 \\
-1& 0 & 0 & 0 & 0
\end{pmatrix} \, , \qquad
\Bc=\begin{pmatrix}
0 & 0 &0 & 0 & 0\\
0 & 0& 0 & 0 & -1 \\
0 & 0 & 0  & 0 & 0\\
0 & 0 & 0 & 0  & 0 \\
0& -1 & 0 & 0 & 0
\end{pmatrix} \, .
\]
As in \cite{ali-hunter} we compute an eigenvector $\Rb$ from 
$
(i\Ac -\Bc)\Rb=0.
$ After a convenient choice of normalization, this eigenvector is given explicitly by
\begin{equation}
\begin{array}{ll}\label{defR}
\Rb=(\la-v^0_1, i( \la-v^0_1) , -B^0_1 , -iB^0_1 ,  d)^T, \qquad\mbox{where   }\;   d\doteq (\la-v^0_1)^2- (B^0_1)^2.
\end{array}
\end{equation}
The general solution of \eqref{equ1orderFourier2} is
\begin{equation*}
\hat{U}^{(1)}(k,\eta,\tau)=a(k,\tau)e^{-k\eta}\Rb + b(k,\tau)e^{k\eta}\overline{\Rb}\, ,
\end{equation*}
where $a(k,\tau)$ and $b(k,\tau)$ are arbitrary complex-valued functions, the bar denotes a complex conjugate. The condition \eqref{decay1} at infinity implies
\begin{equation}\label{decay2}
\ds \lim_{\eta\to+\infty}\hat{U}^{(1)}(k,\eta,\tau)=0\, ; \\
\end{equation}
then we find
\begin{equation}
\begin{array}{ll}\label{sol1}
\hat{U}^{(1)}(k,\eta,\tau)=\begin{cases}
a(k,\tau)e^{-k\eta}\Rb, &\qquad\mbox{if }  k>0\, ,\\
b(k,\tau)e^{k\eta}\overline{\Rb},&\qquad\mbox{if }  k<0\, .
\end{cases}
\end{array}
\end{equation}

Let us consider now problem \eqref{equ12orderFourier} for $\eta<0$. Here we must work differently than before. From the second equation in \eqref{equ12orderFourier}, 
$
 \hat H_2^{(1)}=-\hat E^{(1)}/\nu\la$, and substituting in the other equations of \eqref{equ12orderFourier} we get
\begin{equation}
\begin{array}{ll}\label{equE}
 \p_\eta^2 \hat E^{(1)}+k^2(\nu^2\la^2-1)\hat E^{(1)}=0.
\end{array}
\end{equation}
In order to have 
\begin{equation}\label{decay3}
\ds \lim_{\eta\to-\infty}\hat{V}^{(1)}(k,\eta,\tau)=0\,  \\
\end{equation}
(obtained from \eqref{decay1}), we need to prescribe in \eqref{equE}
\begin{equation}
\begin{array}{ll}\label{condlambda}
\nu|\la|<1.
\end{array}
\end{equation}
The general solution of \eqref{equE} is
\begin{equation}\label{genersol}
\hat{E}^{(1)}(k,\eta,\tau)=\a(k,\tau)e^{\sla k\eta} + \b(k,\tau)e^{-\sla k\eta}\, ,
\end{equation}
where $\a(k,\tau)$ and $\b(k,\tau)$ are arbitrary complex-valued functions and 
\[
\sigma(\la)\doteq\sqrt{1-\nu^2\la^2}.
\]
From \eqref{equ12orderFourier}, \eqref{genersol} the general solution for the other unknowns is
\begin{equation}\label{genersol2}
\begin{array}{ll}
\hat{H}_1^{(1)}(k,\eta,\tau)=\frac{\sla}{i\nu\la}\left\{\a(k,\tau)e^{\sla k\eta} - \b(k,\tau)e^{-\sla k\eta}\right\}\, ,\\
\hat{H}_2^{(1)}(k,\eta,\tau)=-\frac{1}{\nu\la}\left\{\a(k,\tau)e^{\sla k\eta} + \b(k,\tau)e^{-\sla k\eta}\right\}\, .
\end{array}
\end{equation}
Finally, imposing the condition \eqref{decay3} at infinity to \eqref{genersol}, \eqref{genersol2} we find that
\begin{equation}
\begin{array}{ll}\label{sol2}
\hat{V}^{(1)}(k,\eta,\tau)=\begin{cases}
\a(k,\tau)e^{\sla k\eta}\begin{pmatrix}
 -i {\sla}/{\nu\la} \\
-{1}/{\nu\la}\\
1 
\end{pmatrix}, &\qquad\mbox{if }  k>0\, ,\\
\\
\b(k,\tau)e^{-\sla k\eta}\begin{pmatrix}
 i{\sla}/{\nu\la} \\
- {1}/{\nu\la}\\
1 
\end{pmatrix},&\qquad\mbox{if }  k<0\, .
\end{cases}
\end{array}
\end{equation}


Next, we use the solution \eqref{sol1}, \eqref{sol2} in the jump conditions \eqref{bc1orderFourier}.
First we consider the case $k>0$.
Under the assumption $\la-v^0_1\not=0$ or  $B^0_1\not=0$, the resulting equations may be written as a linear system 
for the unknowns $(a,\a,k\hvpu)$:
\begin{equation}
\begin{array}{ll}\label{systemjump}
\begin{pmatrix}
1&0 &1 \\
 0&1 &i\nu\la  H^0_1\\
 d& i{\sla}H^0_1/\nu\la&0
\end{pmatrix}
\begin{pmatrix}
a \\
\a \\
k\hvpu
\end{pmatrix}=0.
\end{array}
\end{equation}
This system has a nontrivial solution if 
\begin{equation}
\begin{array}{ll}\label{condlamda2}
d = (\la-v^0_1)^2- (B^0_1)^2=(H^0_1)^2\sla.
\end{array}
\end{equation}

We discuss the possible real roots $\la$ of \eqref{condlamda2} that also satisfy \eqref{condlambda}.
\begin{lemma}\label{lemma1}
\begin{enumerate}
\item If $|B^0_1|>|v^0_1|+1/\nu$, equation \eqref{condlamda2} does not have any real root.
\item If $ |B^0_1|=|v^0_1|+1/\nu$, for all $|H^0_1|>0$ and $v^0_1\not=0$ there exists one real root $\la=-\sgn(v^0_1)/\nu$. If $v^0_1=0$ then $\la=\pm1/\nu$. Thus in any case $|\la|=1/\nu$.
\item If $|v^0_1|-1/\nu \leq |B^0_1|<|v^0_1|+1/\nu$, for all $|H^0_1|>0$ there exist one or two real roots $\la$ of \eqref{condlamda2} such that $|\la|<1/\nu$.
\item If $ |B^0_1|<|v^0_1|-1/\nu$, there exists $H^*>0$ such that, for all $|H^0_1|\ge H^*$, there exist two real roots $\la$ of \eqref{condlamda2} such that $|\la|<1/\nu$ (coincident roots if $|H^0_1|= H^*$); if $|H^0_1|<H^*$ \eqref{condlamda2} does not have any real root.
\end{enumerate}
\end{lemma}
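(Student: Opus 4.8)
The plan is to recast \eqref{condlamda2} as a one‑dimensional graphical problem. First I would normalise: since $\la\mapsto-\la$ carries \eqref{condlamda2} with data $(v_1^0,B_1^0,H_1^0)$ into the same equation with data $(-v_1^0,B_1^0,H_1^0)$, leaves \eqref{condlambda} unchanged, and the hypotheses of (1)--(4) are even in $v_1^0$, it suffices to treat $v_1^0\ge0$. Put $v:=v_1^0\ge0$, $b:=|B_1^0|$, $h:=|H_1^0|>0$, $\sigma(\la)=\sqrt{1-\nu^2\la^2}$, and
$$
F(\la):=((\la-v)^2-b^2)-h^2\sigma(\la)=:f(\la)-g(\la).
$$
Because the right‑hand side of \eqref{condlamda2} is real only for $\nu|\la|\le1$, where it equals $g(\la)\ge0$, a real $\la$ solves \eqref{condlamda2} exactly when $\la\in[-1/\nu,1/\nu]$ and $F(\la)=0$ (and $F(\la)=0$ then forces $f(\la)=g(\la)\ge0$). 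Thus the lemma reduces to counting the zeros of $F$ on $[-1/\nu,1/\nu]$, those satisfying \eqref{condlambda} being the ones in the open interval.

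The engine of the proof is that $F$ is continuous on $[-1/\nu,1/\nu]$, smooth on the interior, and there
$$
g'(\la)=-\frac{h^2\nu^2\la}{\sigma(\la)},\qquad F''(\la)=2+\frac{h^2\nu^2}{\sigma(\la)^3}>0,
$$
so $F$ is strictly convex, hence has at most two zeros; moreover $F'(\la)\to-\infty$ as $\la\to(-1/\nu)^+$ and $F'(\la)\to+\infty$ as $\la\to(1/\nu)^-$, so $F$ has a unique interior minimiser $\la^\ast$. Since $g$ vanishes at $\pm1/\nu$, one has $F(-1/\nu)=(v+1/\nu)^2-b^2\ge F(1/\nu)=(v-1/\nu)^2-b^2$. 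I would record these facts once and then read off the four regimes from the signs of the endpoint values and from whether $f$ has a zero inside $(-1/\nu,1/\nu)$.

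Concretely: in (1) $b>v+1/\nu$ gives $F(\pm1/\nu)<0$, and convexity puts $F$ below its negative chord, so $F<0$ and there is no root. In (2) $b=v+1/\nu$ gives $F(-1/\nu)=0$ and $F(1/\nu)=-4v/\nu\le0$; if $v>0$ convexity gives $F<0$ on $(-1/\nu,1/\nu]$, leaving only $\la=-1/\nu=-\sgn(v)/\nu$, while if $v=0$ both endpoints are zeros and $F(0)=-b^2-h^2<0$ forces $F<0$ in between, so the zeros are exactly $\pm1/\nu$. In (3) $v-1/\nu\le b<v+1/\nu$ gives $F(-1/\nu)>0$, while the zero $\rho:=v-b$ of $f$ lies in $(-1/\nu,1/\nu]$: if $\rho<1/\nu$ then $F(\rho)=-h^2\sigma(\rho)<0$, and if $\rho=1/\nu$ then $F(1/\nu)=0$ but $F'\to+\infty$ there forces $F<0$ just to the left, so in either case $F$ has a negative minimum; strict convexity then gives one zero in $(-1/\nu,1/\nu)$ if $F(1/\nu)\le0$ and two if $F(1/\nu)>0$ --- one or two roots with $|\la|<1/\nu$. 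Finally in (4) $b<v-1/\nu$ (so $v>1/\nu$) makes $f>0$ on $[-1/\nu,1/\nu]$, hence $F(\pm1/\nu)>0$ and all zeros are interior; since $F$ depends on $h$ only through $-h^2\sigma(\la)$ with $\sigma(\la^\ast)>0$, the quantity $m(h):=\min_{[-1/\nu,1/\nu]}F=F(\la^\ast)$ is continuous and strictly decreasing in $h^2$, with limit $(v-1/\nu)^2-b^2>0$ as $h\to0$ and $-\infty$ as $h\to\infty$; so there is a unique $H^\ast>0$ with $m(H^\ast)=0$, giving no root for $|H_1^0|<H^\ast$, a double (coincident) root at $\la^\ast$ for $|H_1^0|=H^\ast$, and two simple interior roots for $|H_1^0|>H^\ast$.

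The substantive steps are the convexity identity for $F''$ --- which is exactly what reduces everything to elementary picture‑drawing --- and the monotonicity‑in‑$h^2$ argument in (4) (an envelope/compactness argument for $m(h)$). The only place I expect to need care is the bookkeeping of roots sitting exactly at $\pm1/\nu$ in the transitional regime (2) and on the boundary $b=v-1/\nu$ of (3), where \eqref{condlambda} holds with equality rather than strict inequality.
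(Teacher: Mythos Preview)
Your proof is correct and is, at bottom, the same idea as the paper's: both reduce \eqref{condlamda2} to the intersection of the parabola $y=(\la-v_1^0)^2-(B_1^0)^2$ with the upper half-ellipse $\nu^2\la^2+y^2/(H_1^0)^4=1$ (equivalently, to the equation $f(\la)=g(\la)$ on $[-1/\nu,1/\nu]$) and then read off the four regimes from the relative positions of these curves. The paper leaves the case analysis at the level of a picture (``considering all possible cases''), whereas you make it rigorous via the strict convexity of $F=f-g$ and the monotonicity-in-$h^2$ argument in case~(4); this is a clean execution of exactly what the geometric picture suggests, not a genuinely different route.
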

Observe that for all such $|\la|<1/\nu$, from \eqref{condlamda2} there holds $\la\not=v^0_1$ and $\la\not=v^0_1\pm B^0_1$, i.e. $d\not=0$.
\begin{proof}
The roots of \eqref{condlamda2} are given by the points of intersection in the plane $\la,y$ of the parabola $y=(\la-v^0_1)^2- (B^0_1)^2$ with the half-ellipse $\nu^2\la^2+y^2/(H^0_1)^4=1,\, y\ge0$. Considering all possible cases gives the proof of the lemma.
\end{proof}
We choose $\la$ to be one of the values found in Lemma \ref{lemma1} such that $|\la|<1/\nu$, that is satisfying \eqref{condlambda}. The solution of \eqref{systemjump} is then
\begin{equation}
\begin{array}{ll}\label{}
a=-k\hvpu, \qquad
\a= - \nu\la H^0_1 ik\hvpu   \qquad\mbox{if }  k>0\, .
\end{array}
\end{equation}

For $k<0$ we proceed in a similar way, solving an algebraic system for the unknowns $(b,\b,k\hvpu)$:
\begin{equation}
\begin{array}{ll}\label{systemjump2}
\begin{pmatrix}
-1&0 &1 \\
 0&1 &i \nu\la  H^0_1\\
 d& -i{\sla}H^0_1/{\nu \la}&0
\end{pmatrix}
\begin{pmatrix}
b \\
\b \\
k\hvpu
\end{pmatrix}=0\, .
\end{array}
\end{equation}
This system has a nontrivial solution under the same condition \eqref{condlamda2}.  The solution of \eqref{systemjump2} is then
\begin{equation}
\begin{array}{ll}\label{}
b=k\hvpu, \qquad
\b= - \nu\la H^0_1 ik\hvpu   \qquad\mbox{if }  k<0\, .
\end{array}
\end{equation}
Summarizing these results, we have shown that when $\la$ satisfies \eqref{condlamda2}, the solution of \eqref{equ1orderFourier}--\eqref{bc1orderFourier}, \eqref{decay2}, \eqref{decay3} is given by
\begin{equation}
\begin{array}{ll}\label{sol12}
\hat{U}^{(1)}(k,\eta,\tau)=\begin{cases}
-|k|\hvpu(k,\tau)e^{-k\eta}\Rb, &\qquad\mbox{if }  k>0\, ,\\
-|k|\hvpu(k,\tau)e^{k\eta}\overline{\Rb},&\qquad\mbox{if }  k<0\, ,
\end{cases}
\end{array}
\end{equation}
\begin{equation}
\begin{array}{ll}\label{sol22}
\hat{V}^{(1)}(k,\eta,\tau)=
  H^0_1 \hvpu(k,\tau)e^{\sla k\eta}\begin{pmatrix}
- {\sla}|k| \\
{i}k\\
-i \nu\la k\end{pmatrix}  .

\end{array}
\end{equation}
This solution depends on the unknown function $\hvpu(k,\tau)$, which describes the profile of the surface wave. By imposing solvability conditions on the equations for the second order corrections to this first order solution \eqref{sol12}, \eqref{sol22}, we will derive an evolution equation for the function $\hvpu(k,\tau)$.

\section{The second order equations}\label{second order}

Introducing the Fourier transforms
\[
\hat{U}^{(2)}(k,\eta,\tau)=\frac1{2\pi}\intR U^{(2)}(\th,\eta,\tau)e^{-ik\th}d\th,
\]
\[
\hat{V}^{(2)}(k,\eta,\tau)=\frac1{2\pi}\intR V^{(2)}(\th,\eta,\tau)e^{-ik\th}d\th,
\]
\[
\hat{\vphi}^{(2)}(k,\tau)=\frac1{2\pi}\intR \vphi^{(2)}(\th,\tau)e^{-ik\th}d\th,
\]
and Fourier transforming \eqref{equ2order}--\eqref{bc2order} with respect to $\th$, we find the equations
\begin{equation}
\begin{cases}\label{equ2orderFourier}
(\la-v^0_1)ik\hat v^{(2)} + ik B^0_1\hat B^{(2)}-\begin{pmatrix}
ik \\
 \p_\eta
\end{pmatrix}\hat q^{(2)}=\hat p_1\, ,\\
(\la-v^0_1)ik \hat B^{(2)} + ik B^0_1\hat v^{(2)}=\hat p_2\, ,\\
-ik\hat v_1^{(2)}-\p_\eta\hat v_2^{(2)}=\hat p_3\, ,&\qquad\mbox{for }  \eta>0\, ,\\
\end{cases}
\end{equation}
\begin{equation}
\begin{cases}\label{equ22orderFourier}
\nu\la ik\hat H_1^{(2)}-\p_\eta\hat E^{(2)}=\hat p_1'\, ,\\
\nu\la ik\hat H_2^{(2)}+ik\hat E^{(2)}=\hat p_2'\, ,\\
\nu\la ik\hat E^{(2)}+ik\hat H_2^{(2)}-\p_\eta\hat H_1^{(2)}=\hat p_3'\, ,&\qquad\mbox{for }  \eta<0\, ,
\end{cases}
\end{equation}
\begin{equation}
\begin{cases}\label{bc2orderFourier}
\ds (\la-v^0_1)ik \hvpd + \hat v_2^{(2)}=\hat r_1\, ,\\
\ds ikB^0_1 \hvpd - \hat B_2^{(2)}=\hat r_2\, , \qquad
ikH^0_1 \hvpd - \hat H_2^{(2)}=\hat r_3\, ,\\
\ds \hat q^{(2)}-H_1^0\hat H_1^{(2)}=\hat r_4\, , \qquad
\hat E^{(2)}+i\nu\la kH_1^0 \hvpd=\hat r_5 \, ,&\qquad\mbox{for }  \eta=0\, .
\end{cases}
\end{equation}

\subsection{The second order equations in the plasma region}

Let us first consider problem \eqref{equ2orderFourier}, that we write in the form
\begin{equation}
\begin{array}{ll}\label{equ2orderFourier2}
ik\Ac \hat{U}^{(2)}+\Bc\p_\eta \hat{U}^{(2)}=\hat p\, .
\end{array}
\end{equation}
From \eqref{decay1}, the solution of \eqref{equ2orderFourier2} must satisfy the decay condition
\begin{equation}\label{decay4}
\ds \lim_{\eta\to+\infty}\hat{U}^{(2)}(k,\eta,\tau)=0\, .
\end{equation}
In order to solve \eqref{equ2orderFourier2}, \eqref{decay4}, as in \cite{ali-hunter} we introduce a left eigenvector $\Lb$ such that
\[
\Lb\cdot (i\Ac-\Bc)=0\, ,
\]
normalized by
\begin{equation}
\label{Lnorm}
\Lb\cdot\Bc\Rb=\overline\Lb\cdot\Bc\overline\Rb=1\, .
\end{equation}
It follows from the equations satisfied by $\Lb,\Rb$ that
\begin{equation}
\label{equL}
\Lb\cdot\Bc\overline\Rb=\overline\Lb\cdot\Bc\Rb=0\, .
\end{equation}
We compute $\Lb$ and obtain
\[
\Lb=-\frac1{2id(\la-v^0_1)}\Rb\, .
\]
We also introduce a linear subspace consisting of the vectors $\Sb$ such that
\begin{equation}\label{equS}
\Lb\cdot \Bc\Sb=\overline\Lb \cdot \Bc \Sb=0\, .
\end{equation}
This subspace is complementary to the subspace spanned by $\{\Rb,\overline\Rb\}$.
We look for a solution of \eqref{equ2orderFourier2} in the form
\begin{equation}
\begin{array}{ll}\label{formU2}
\hat{U}^{(2)}(k,\eta,\tau)=\Sb(k,\eta,\tau)+a(k,\eta,\tau)\Rb+b(k,\eta,\tau)\overline\Rb,
\end{array}
\end{equation}
where $\Sb$ satisfies \eqref{equS}. We will solve for the vector-valued function $\Sb$ and the scalar functions $a,b$.
Substituting \eqref{formU2} in \eqref{equ2orderFourier2} gives
\begin{equation}
\begin{array}{ll}\label{46}
ik\Ac\Sb+\Bc\p_\eta\Sb+(\p_\eta a+ka)\Bc\Rb+(\p_\eta b-kb)\Bc\overline\Rb=\hat p.
\end{array}
\end{equation}
Left multiplying \eqref{46} by $\Lb$ and $\overline\Lb$, and using \eqref{equL}, \eqref{equS}, we find the equations
\begin{equation*}
\begin{array}{ll}\label{}
\p_\eta a+ka=\Lb\cdot \hat p\, ,\qquad  \p_\eta b-kb=\overline\Lb\cdot \hat p\, ,
\end{array}
\end{equation*}
whose solutions are given by
\begin{eqnarray}
a(k,\eta,\tau)=e^{-k\eta} \left( a_0(k,\tau)+ \int_0^\eta \Lb\cdot \hat p(k,\eta',\tau)e^{k\eta'}d\eta'\right),
 \label{equa} \\
b(k,\eta,\tau)=e^{k\eta} \left( b_0(k,\tau)+ \int_0^\eta \overline\Lb\cdot \hat p(k,\eta',\tau)e^{-k\eta'}d\eta'\right),
\label{equb} 
\end{eqnarray}  
where $a_0(k,\tau),b_0(k,\tau)$ are arbitrary functions of integration, that will be chosen later.

Next, we solve \eqref{46} for $\Sb$. From \eqref{equS}, vectors $\Sb$  of the above linear subspace have the form
\[
\Sb=(S_1,0,S_3,S_4,0)^T, 
\]
with arbitrary components $S_1,S_3,S_4$. We introduce vectors $\Lb_j$, with $j=1,3,4$, such that 
\begin{equation}
\begin{array}{ll}\label{equLj}
\Lb_j\cdot \Bc\Rb =\Lb_j\cdot \Bc\overline\Rb = 0, \qquad i\Lb_j\cdot \Ac\Sb = S_j.
\end{array}
\end{equation}
They are given explicitly by
\[
\Lb_1= \frac{1}{d}\left({-i(\la-v^0_1)},0 , {iB^0_1},0, 0\right)^T\, ,
\qquad
\Lb_3=  \frac{1}{d}\left({iB^0_1},0 , -{i(\la-v^0_1)},0, 0\right)^T\, ,
\]
\[
\Lb_4= \left(0,0 , 0, - \frac{i}{\la-v^0_1}, 0\right)^T\, .
\]
Left multiplying \eqref{46} by $\Lb_j$ and using \eqref{equLj} gives
\[ S_j=\frac1{k}\Lb_j\cdot \hat p, \qquad\mbox{for  }j=1,3,4 \, .
\]
Thus the solution for $\Sb$ is given by\footnote{The simpler form of $\Sb$ in \eqref{solS}, with respect to (6.16) in \cite{ali-hunter}, seems due to the choice of the symmetric form of equations \eqref{mhd2}, instead of the conservative form \eqref{mhd1}.}
\begin{equation}
\begin{array}{ll}\label{solS}
\ds \Sb=\left(\frac1{k}\Lb_1\cdot \hat p, 0, \frac1{k}\Lb_3\cdot \hat p, \frac1{k}\Lb_4\cdot \hat p, 0 \right)^T \, .
\end{array}
\end{equation}
We compute the Fourier transform of the right-hand sides of \eqref{equ2orderFourier}. For $p_1=(p_{11}, p_{12})$ we have
\begin{multline*}
\hat p_{11}(k,\eta,\tau)= -(\la -v^0_1)|k| e^{-|k|\eta} \hvpu_\tau(k,\tau)\\
-id\intR |k-\ell | \, \ell \, (|k-\ell | - | \ell| ) e^{-(|k-\ell | + | \ell| )\eta} \hvpu(k-\ell) \hvpu(\ell)\, d\ell\, ,
\end{multline*}
\begin{multline*}
\hat p_{12}(k,\eta,\tau)= i(\la -v^0_1)k e^{-|k|\eta} \hvpu_\tau(k,\tau)\\
+d\intR (k-\ell ) \, \ell \,  | \ell| \left(  e^{-(|k-\ell | + | \ell| )\eta}  - e^{- | \ell| \eta}  \right) \hvpu(k-\ell) \hvpu(\ell)\, d\ell \\
-d\intR |k-\ell | \, \ell^2 e^{-(|k-\ell | + | \ell| )\eta} \hvpu(k-\ell) \hvpu(\ell)\, d\ell \, ,
\end{multline*}
disregarding in the integrals the dependence on $\tau$, for the sake of simplicity.
For $p_2=(p_{21}, p_{22})$ and $p_3$ we obtain
\[
\hat p_{21}(k,\eta,\tau)=  B^0_1|k| e^{-|k|\eta} \hvpu_\tau(k,\tau) \, ,
\]
\[
\hat p_{22}(k,\eta,\tau)=  iB^0_1k e^{-|k|\eta} \hvpu_\tau(k,\tau) \, ,
\]
\[
\hat p_{3}(k,\eta,\tau)=  -i(\la -v^0_1)\intR (k-\ell ) \, \ell^2 e^{- | \ell| \eta} \hvpu(k-\ell) \hvpu(\ell)\, d\ell \, .
\]
It follows that
\begin{multline}\label{Lhp}
\Lb\cdot \hat p(k,\eta,\tau)= i\frac{  (\la -v^0_1)^2+(B^0_1)^2 }{2d(\la -v^0_1)} \, (k-|k|)e^{-|k|\eta} \hvpu_\tau(k,\tau)\\
+\frac1{2} \intR \ell \left\{ |k-\ell |  (|k-\ell | - | \ell| ) + |k-\ell |  \ell - (k-\ell )  | \ell | 
\right\}
 e^{-(|k-\ell | + | \ell| )\eta} \hvpu(k-\ell) \hvpu(\ell)\, d\ell \\
+\frac1{2}  \intR (k-\ell ) \,| \ell | (|\ell | +  \ell )e^{- | \ell | \eta} \hvpu(k-\ell) \hvpu(\ell)\, d\ell \, ,
\end{multline}
\begin{multline}\label{Lhpbar}
\overline\Lb\cdot \hat p(k,\eta,\tau)= i\frac{  (\la -v^0_1)^2+(B^0_1)^2 }{2d(\la -v^0_1)} \,  (k + |k|)e^{-|k|\eta} \hvpu_\tau(k,\tau)\\
+\frac1{2} \intR \ell \left\{ -|k-\ell |  (|k-\ell | - | \ell| ) + |k-\ell |  \ell - (k-\ell )  | \ell | 
\right\}
 e^{-(|k-\ell | + | \ell| )\eta} \hvpu(k-\ell) \hvpu(\ell)\, d\ell \\
+\frac1{2}  \intR (k-\ell ) \,| \ell | (  \ell - |\ell | )e^{- | \ell | \eta} \hvpu(k-\ell) \hvpu(\ell)\, d\ell \, ,
\end{multline}
and
\begin{equation}
\begin{array}{ll}\label{L1hp}

\ds \Lb_1\cdot \hat p(k,\eta,\tau)= i\frac{  (\la -v^0_1)^2+(B^0_1)^2 }{d} \,  |k|e^{-|k|\eta} \hvpu_\tau(k,\tau)\\
 \qquad   -(\la -v^0_1) \intR |k-\ell | \, \ell \, (|k-\ell | - | \ell| )  e^{-(|k-\ell | + | \ell| )\eta} \hvpu(k-\ell) \hvpu(\ell)\, d\ell 
 \, , \\
 \\
\ds \Lb_3\cdot \hat p(k,\eta,\tau)= - \frac{ 2iB^0_1 (\la -v^0_1)}{d} \, |k|e^{-|k|\eta} \hvpu_\tau(k,\tau)\\
 \qquad   +B^0_1 \intR |k-\ell | \, \ell \, (|k-\ell | - | \ell| )  e^{-(|k-\ell | + | \ell| )\eta} \hvpu(k-\ell) \hvpu(\ell)\, d\ell 
 \, , \\
 \\
\ds \Lb_4\cdot \hat p(k,\eta,\tau)= \frac{ B^0_1}{\la -v^0_1} \, ke^{-|k|\eta} \hvpu_\tau(k,\tau)
 \, .
\end{array}
\end{equation}
The expressions obtained in \eqref{Lhp}--\eqref{L1hp} are to be inserted in \eqref{equa}, \eqref{equb}, \eqref{solS} to give $a,b,\Sb$.

In order to verify the decay condition \eqref{decay4} for $\hat{U}^{(2)}(k,\eta,\tau)$, given by \eqref{formU2}, we first notice that $\Sb(k,\eta,\tau)$ depends on $\eta$ only through the exponentials of $-|k|\eta$ and $-(|k-\ell | + | \ell| )\eta$, see \eqref{solS} and \eqref{L1hp}, so that 
\begin{equation*}\label{}
\ds \lim_{\eta\to+\infty}\Sb(k,\eta,\tau)=0\, .
\end{equation*}
Thus $\hat{U}^{(2)}(k,\eta,\tau)$ satisfies \eqref{decay4} if and only if 
\begin{equation}\label{decaya}
\ds \lim_{\eta\to+\infty}a(k,\eta,\tau)=0\, ,
\end{equation}
\begin{equation}\label{decayb}
\ds  \lim_{\eta\to+\infty}b(k,\eta,\tau)=0\, .
\end{equation}
From \eqref{equa}, \eqref{equb}, \eqref{Lhp}, \eqref{Lhpbar}, condition \eqref{decaya} is automatically satisfied if $k>0$, and \eqref{decayb} is automatically satisfied if $k<0$. It follows that $a_0$ remains undetermined for $k>0$, and $b_0$ remains undetermined for $k<0$.
Instead, \eqref{decaya}, \eqref{decayb} may be used to determine $a_0$ if $k<0$, and $b_0$ if $k>0$, as functions of $\hvpu$ through \eqref{Lhp}, \eqref{Lhpbar}:
\begin{multline}\label{deta0}
\ds   a_0(k,\tau)= - \int_0^{+\infty} \Lb\cdot \hat p(k,\eta',\tau)e^{k\eta'}d\eta' 
\ds  = i\, \frac{  (\la -v^0_1)^2+(B^0_1)^2 }{2d(\la -v^0_1)}  \hvpu_\tau(k,\tau)\\
\ds -\frac1{2} \intR \ell \, \frac{  |k-\ell |\ell-  (k-\ell ) | \ell|  + |k-\ell |  (|k-\ell | -  | \ell |) 
}{|k-\ell | +|k|+  | \ell |}\,
 \hvpu(k-\ell) \hvpu(\ell)\, d\ell \\
\ds -\frac1{2}  \intR \frac{  (k-\ell )| \ell|    (   | \ell | + \ell  ) 
}{  |k| + | \ell |}\,
 \hvpu(k-\ell) \hvpu(\ell)\, d\ell \, , \qquad \mbox{if  }k<0\, ,
\end{multline}
\begin{multline}\label{detb0}
\ds b_0(k,\tau)= - \int_0^{+\infty} \overline\Lb\cdot \hat p(k,\eta',\tau)e^{-k\eta'}d\eta' 
\ds  =- i\, \frac{  (\la -v^0_1)^2+(B^0_1)^2 }{2d(\la -v^0_1)}  \hvpu_\tau(k,\tau)\\
\ds -\frac1{2} \intR \ell \, \frac{  |k-\ell |\ell-  (k-\ell ) | \ell|  - |k-\ell |  (|k-\ell | -  | \ell |) 
}{|k-\ell | +|k| +  | \ell |}\,
 \hvpu(k-\ell) \hvpu(\ell)\, d\ell \\
\ds +\frac1{2}  \intR \frac{  (k-\ell )| \ell|    (   | \ell | -\ell  ) 
}{  |k| +  | \ell |}\,
 \hvpu(k-\ell) \hvpu(\ell)\, d\ell \, , \qquad \mbox{if  }k>0\, .
\end{multline}


\subsection{The second order equations in vacuum}

Let us consider problem \eqref{equ22orderFourier} for $\eta<0$. From the second equation in \eqref{equ22orderFourier}, 
$
ik \hat H_2^{(2)}=(\hat p'_2- ik\hat E^{(2)})/\nu\la$, and substituting in the other equations of \eqref{equ22orderFourier} we get
\begin{equation}
\begin{array}{ll}\label{equE2}
 \p_\eta^2 \hat E^{(2)}+k^2(\nu^2\la^2-1)\hat E^{(2)}= - P,\end{array}
\end{equation}
where
\begin{equation}
\begin{array}{ll}\label{defP}
P=  \nu\la ik \hat p'_3 - ik\hat p'_2 + \p_\eta \hat p'_1
\, .
\end{array}
\end{equation}
We solve \eqref{equE2} with the decay condition
\begin{equation}\label{decay5}
\ds \lim_{\eta\to-\infty}\hat{E}^{(2)}(k,\eta,\tau)=0\, 
\end{equation}
(obtained from \eqref{decay1}), and \eqref{condlambda}.
The general solution of \eqref{equE2} is
\begin{equation}\label{genersol3}
\hat{E}^{(1)}(k,\eta,\tau)=
\a'(k,\tau)e^{\sla k\eta} + \b'(k,\tau)e^{-\sla k\eta}
+\frac1{2|k|\sla}\intRneg e^{-\sla |k| | \eta-\zeta|} P(k,\zeta,\tau)\, d\zeta
\, ,
\end{equation}
where $\a'(k,\tau)$ and $\b'(k,\tau)$ are arbitrary complex-valued functions. From \eqref{equ22orderFourier}, \eqref{genersol3} the general solution for the other unknowns is
\begin{multline}\label{genersol4}
\hat{H}_1^{(1)}(k,\eta,\tau)=
\frac{\sla}{i\nu\la }\a'(k,\tau)e^{\sla k\eta} -   \frac{\sla}{i\nu\la } \b'(k,\tau)e^{-\sla k\eta}
\\
+\frac1{2\nu\la ik|k|\sla} \left\{ \intRneg e^{-\sla |k| | \eta-\zeta|} \p_\zeta P(k,\zeta,\tau)\, d\zeta - e^{-\sla |k \eta|}  P(k,0,\tau)\right\}
+\frac1{\nu\la ik}\hat p'_1(k,\eta,\tau) \, ,
\end{multline}
\begin{multline}\label{genersol5}
\hat{H}_2^{(1)}(k,\eta,\tau)=
-\frac{1}{\nu\la }\a'(k,\tau)e^{\sla k\eta} -   \frac{1}{\nu\la } \b'(k,\tau)e^{-\sla k\eta}
\\
-\frac1{2\nu\la |k|\sla}\intRneg e^{-\sla |k| | \eta-\zeta|}  P(k,\zeta,\tau)\, d\zeta
+\frac1{\nu\la ik}\hat p'_2(k,\eta,\tau) 
\, .
\end{multline}
Imposing the decay condition
\begin{equation*}\label{decay6}
\ds \lim_{\eta\to-\infty}\hat{V}^{(2)}(k,\eta,\tau)=0\, 
\end{equation*}
to \eqref{genersol3}--\eqref{genersol5} yields that the solution of \eqref{equ22orderFourier} is given by
\begin{multline}\label{sol221}
\hat{V}^{(2)}(k,\eta,\tau)=
 \a'(k,\tau)e^{\sla k\eta}
 \begin{pmatrix}
\frac{\sla}{i\nu\la } \\
-\frac{1}{\nu\la}\\
1
\end{pmatrix} 
\\
+  \frac1{2|k|\sla}\begin{pmatrix}
\frac1{\nu\la ik} \left\{ \intRneg e^{-\sla |k| | \eta-\zeta|} \p_\zeta P(k,\zeta,\tau)\, d\zeta - e^{-\sla |k \eta|}  P(k,0,\tau) \right\}\\
-\frac1{\nu\la }\intRneg e^{-\sla |k| | \eta-\zeta|}  P(k,\zeta,\tau)\, d\zeta\\
\intRneg e^{-\sla |k| | \eta-\zeta|} P(k,\zeta,\tau)\, d\zeta
\end{pmatrix}
+  \begin{pmatrix}
\frac1{\nu\la ik}\hat p'_1 \\
\frac1{\nu\la ik}\hat p'_2\\
0
\end{pmatrix}
, \qquad\mbox{if }  k>0\, ,
\end{multline}
\begin{multline}\label{sol222}
\hat{V}^{(2)}(k,\eta,\tau)=
 \b'(k,\tau)e^{-\sla k\eta}
 \begin{pmatrix}
-\frac{\sla}{i\nu\la } \\
-\frac{1}{\nu\la}\\
1
\end{pmatrix} 
\\
+  \frac1{2|k|\sla}\begin{pmatrix}
\frac1{\nu\la ik} \left\{ \intRneg e^{-\sla |k| | \eta-\zeta|} \p_\zeta P(k,\zeta,\tau)\, d\zeta - e^{-\sla |k \eta|}  P(k,0,\tau) \right\} \\
-\frac1{\nu\la }\intRneg e^{-\sla |k| | \eta-\zeta|}  P(k,\zeta,\tau)\, d\zeta\\
\intRneg e^{-\sla |k| | \eta-\zeta|} P(k,\zeta,\tau)\, d\zeta
\end{pmatrix}
+  \begin{pmatrix}
\frac1{\nu\la ik}\hat p'_1 \\
\frac1{\nu\la ik}\hat p'_2\\
0
\end{pmatrix}
, \qquad\mbox{if }  k<0\, .
\end{multline}
Notice that we need to determine the arbitrary functions $ \a'(k,\tau)$ if $k>0$, and $ \b'(k,\tau)$ if $k<0$.

Substituting \eqref{sol22} in the right-hand sides of \eqref{equ22orderFourier} gives
\begin{equation}
\begin{array}{ll}\label{p'1}

\hat p'_1(k,\eta,\tau)= - \nu\sla H^0_1 |k| e^{\sla |k|\eta} \, \hvpu_\tau(k,\tau)
\\
\ds \qquad  \qquad  +i\nu\la (\nu^2\la^2-1)H^0_1\intR(k-\ell )\, \ell^2 e^{\sla |\ell | \eta}  \hvpu(k-\ell) \hvpu(\ell)\, d\ell,
\\
\\
\hat p'_2(k,\eta,\tau)= i\nu H^0_1 k e^{\sla |k| \eta} \, \hvpu_\tau(k,\tau),
\\
\\
\hat p'_3(k,\eta,\tau)= - i\nu^2\la  H^0_1 k e^{\sla |k| \eta} \, \hvpu_\tau(k,\tau)
\\
\ds \qquad  \qquad  + (\nu^2\la^2-1)\sla H^0_1 \intR(k-\ell )\, \ell |\ell | e^{\sla |\ell | \eta}  \hvpu(k-\ell) \hvpu(\ell)\, d\ell,
\end{array}
\end{equation}
and from here we also obtain
\begin{multline}\label{P}
P(k,\zeta,\tau)= 2\nu^3\la^2 H^0_1 k^2 e^{\sla |k| \zeta} \, \hvpu_\tau(k,\tau)
\\
+i \nu\la  (\nu^2\la^2-1)\sla H^0_1 \intR (k^2-\ell^2 )\, \ell |\ell | e^{\sla |\ell | \zeta}  \hvpu(k-\ell) \hvpu(\ell) \, d\ell,
\end{multline}
\begin{multline}\label{detaP}
\p_\zeta P(k,\zeta,\tau)= 2\nu^3\la^2 \sla H^0_1 k^2 |k| e^{\sla |k| \zeta} \, \hvpu_\tau(k,\tau)
\\
-i \nu\la  (\nu^2\la^2-1)^2 H^0_1 \intR (k^2-\ell^2 )\, \ell^3 e^{\sla |\ell | \zeta}  \hvpu(k-\ell) \hvpu(\ell) \, d\ell.
\end{multline}
Then we substitute \eqref{p'1}--\eqref{detaP} in \eqref{sol221}, \eqref{sol222}.

\subsection{The second order jump conditions}

The first-order solution depends on the unknown function $\hvpu(k,\tau)$, which describes the profile of the surface wave, while the second order solution depends, in addition, on unknown functions $a_0(k,\tau), b_0(k,\tau)$ and $\a'(k,\tau), \b'(k,\tau)$. In this section we study the second order jump conditions. We show that they reduce to a singular linear system of algebraic equations for $(a_0,b_0,\a',\b', \hvpd)$, where $\hvpd(k,\tau)$ is the Fourier transform of the second-order displacement of the interface. 
Imposing solvability conditions on this system gives the evolution equation for the function $\hvpu(k,\tau)$ that we seek.

Let us consider the jump conditions \eqref{bc2orderFourier}, where we substitute the second order corrections obtained in the previous sections.

Let us first assume $k>0$, recalling that in this case we need to determine $a_0(k,\tau),\a'(k,\tau)$ and $ \hvpd(k,\tau)$ (for $k>0$). From \eqref{bc2orderFourier}, \eqref{formU2}, \eqref{equa}, \eqref{solS}, \eqref{L1hp} and \eqref{sol221}, \eqref{p'1}--\eqref{detaP},
evaluated at $\eta=0$, we obtain the linear system
\begin{equation}
\begin{array}{ll}\label{algebra1}
\begin{pmatrix}
1 & 0 &1\\
1 & 0 & 1 \\
0 & 1/\nu\la & iH^0_1\\
d & i\sla H^0_1/\nu\la &  0 \\
0& 1 & i\nu\la H^0_1
\end{pmatrix}
\begin{pmatrix}
a_0\\
\a' \\
k\hvpd
\end{pmatrix}
=
\begin{pmatrix}
\hat r'_1 \\
\hat r'_2 \\
\hat r'_3\\
\hat r'_4 \\
\hat r'_5
\end{pmatrix},

\end{array}
\end{equation}
where we have set
\begin{equation*}
\begin{array}{ll}\label{}
\ds \hat r'_1= \frac1{i(\la-v^0_1)}\hat r_1 + b_0 \, ,\\
\ds \hat r'_2 = \frac1{iB^0_1}\left(  \hat r_2 +  \frac{B^0_1}{\la-v^0_1}\hvpu_\tau \right)+ b_0\, ,\\
\ds \hat r'_3=\hat r_3 +  \frac{H^0_1}{\la}\hvpu_\tau
-\frac1{2\nu\la |k|\sla}\intRneg e^{\sla |k|  \zeta}  P(k,\zeta,\tau)\, d\zeta
\, ,
\\
\ds \hat r'_4=\hat r_4 -b_0d - (H^0_1)^2 \frac{\sla }{\la i}\, \frac{k}{|k|} \hvpu_\tau
 +\frac{H^0_1}{2i\nu\la  \sla} \, \frac{1}{k|k|} \left\{ \intRneg e^{\sla |k|  \zeta}  \p_\zeta P(k,\zeta,\tau)\, d\zeta -  P(k,0,\tau)\right\}
\\
\ds \qquad +\frac{(H^0_1)^2   (\nu^2\la^2-1) }{k} \intR (k-\ell )\, \ell^2  \hvpu(k-\ell) \hvpu(\ell) \, d\ell \, ,
\\
\ds \hat r'_5= \hat r_5
-\frac1{2 |k|\sla}\intRneg e^{\sla |k|  \zeta}  P(k,\zeta,\tau)\, d\zeta
\, ,

\end{array}
\end{equation*}
with $b_0$ given by \eqref{detb0}.
First of all we see that the first two lines of the matrix in the left-hand side of \eqref{algebra1} are equal, and we can verify that $\hat r'_1 =\hat r'_2 $. Moreover, the last row of the matrix in \eqref{algebra1} equals the third one multiplied by $\nu\la$, and actually one verifies that $\hat r'_5 =\nu\la \hat r'_3 $. Thus \eqref{algebra1} may be reduced to 
\begin{equation}
\begin{array}{ll}\label{algebra3}
\begin{pmatrix}
1 & 0 &1\\
0 & 1/\nu\la & iH^0_1\\
d & i\sla H^0_1/\nu\la &  0 \\
\end{pmatrix}
\begin{pmatrix}
a_0\\
\a' \\
k\hvpd
\end{pmatrix}
=
\begin{pmatrix}
\hat r'_1 \\
\hat r'_3\\
\hat r'_4 \\
\end{pmatrix},

\end{array}
\end{equation}
The determinant of the matrix of this system is zero because of \eqref{condlamda2}, i.e. the equation defining $\la$. It is easily seen that the rank of this matrix is 2. Then, the linear system \eqref{algebra3} is solvable if and only if the rank of the augmented matrix is also equal to 2, and this is true if the following condition holds:
\begin{equation}
\begin{array}{ll}\label{solvability0}
d\hat r'_3+iH^0_1 \hat r'_4 -iH^0_1d\hat r'_1=0\, .
\end{array}
\end{equation}
Developing the terms in \eqref{solvability0} we get the solvability condition
\begin{equation}
\begin{array}{ll}\label{solvability1}
\ds   \left( 2 \frac{\la -v^0_1}{d} + \frac{\nu^2\la}{\sla^2} \right)\, \hvpu_\tau(k,\tau) + i \intR \La_+(k,\ell )\hvpu(k-\ell, \tau) \hvpu(\ell, \tau) \, d\ell 
=0\, , \qquad k>0\, ,
\end{array}
\end{equation}
where we have denoted
\begin{multline}\label{Lapiu}
\ds \La_+(k,\ell ) = \ell \frac{|k-\ell | (|k-\ell| -|\ell | ) + (k-\ell ) |\ell | -|k-\ell | \ell }{|k-\ell |  +|k| +|\ell |  } +
\frac{ (k-\ell )  |\ell |(|\ell | - \ell  ) } {|k| +|\ell |  } - (k-\ell ) |\ell | \\
+ \sla 
\Big\{ -k |\ell |
+\frac1{2} \big( (k +\ell  ) \ell - |k-\ell | |\ell |  \big) \Big\} \, .
\end{multline}
Thus, when $k>0$, the system \eqref{algebra3} is solvable if and only if $\hvpu$ satisfies equation \eqref{solvability1} and then the rank of the augmented matrix of the system is equal to 2.
Given the solution $\hvpu$ of \eqref{solvability1} we compute $\hat U^{(1)}, \hat V^{(1)}$ from \eqref{sol12}, \eqref{sol22}. Thus the leading-order term of the asymptotic expansion is uniquely determined. From system \eqref{algebra3} we may obtain $a_0,\a'$ in terms of an arbitrary second order wave profile $\hvpd$, and in turn $\hat U^{(2)}, \hat V^{(2)}$ from \eqref{formU2}, \eqref{equa}, \eqref{solS}, \eqref{L1hp} and \eqref{sol221}, \eqref{p'1}--\eqref{detaP}. The wave profile $\hvpd$ should be determined by considering higher order terms of the asymptotic expansion, see \cite{marcou}.

The case $k<0$ is similar.  Now we need to determine $b_0(k,\tau),\b'(k,\tau)$ and $ \hvpd(k,\tau)$ (for $k<0$). From \eqref{bc2orderFourier}, \eqref{formU2}, \eqref{equb}, \eqref{solS}, \eqref{L1hp} and \eqref{sol222}--\eqref{detaP},
evaluated at $\eta=0$, we obtain the linear system
\begin{equation}
\begin{array}{ll}\label{algebra4}
\begin{pmatrix}
-1 & 0 &1\\
-1 & 0 & 1 \\
0 & 1/\nu\la & iH^0_1\\
d & -i\sla H^0_1/\nu\la &  0 \\
0& 1 & i\nu\la H^0_1
\end{pmatrix}
\begin{pmatrix}
b_0\\
\b' \\
k\hvpd
\end{pmatrix}
=
\begin{pmatrix}
\hat r''_1 \\
\hat r''_2 \\
\hat r'_3\\
\hat r''_4 \\
\hat r'_5
\end{pmatrix},

\end{array}
\end{equation}
where we have set
\begin{equation*}
\begin{array}{ll}\label{}
\ds \hat r''_1= \frac1{i(\la-v^0_1)}\hat r_1 - a_0 \, ,\\
\ds \hat r''_2 = \frac1{iB^0_1}\left(  \hat r_2 +  \frac{B^0_1}{\la-v^0_1}\hvpu_\tau \right) - a_0\, ,\\
\ds \hat r''_4=\hat r_4 -a_0d - (H^0_1)^2 \frac{\sla }{\la i}\, \frac{k}{|k|} \hvpu_\tau
 +\frac{H^0_1}{2i\nu\la  \sla} \, \frac{1}{k|k|} \left\{ \intRneg e^{\sla |k|  \zeta}  \p_\zeta P(k,\zeta,\tau)\, d\zeta - P(k,0,\tau) \right\}
\\
\ds \qquad +\frac{(H^0_1)^2   (\nu^2\la^2-1) }{k} \intR (k-\ell )\, \ell^2  \hvpu(k-\ell) \hvpu(\ell) \, d\ell \, ,
\end{array}
\end{equation*}
with $a_0$ given by \eqref{deta0}.
By similar arguments as before we show that the linear system \eqref{algebra4} is solvable if and only if  
\begin{equation*}
\begin{array}{ll}\label{}
d\hat r'_3-iH^0_1 \hat r''_4 -iH^0_1d\hat r''_1=0\, .
\end{array}
\end{equation*}
Expanding the terms we get the solvability condition
\begin{equation}
\begin{array}{ll}\label{solvability2}
\ds    \left( 2 \frac{\la -v^0_1}{d} + \frac{\nu^2\la}{\sla^2} \right)\, \hvpu_\tau(k,\tau) + i \intR \La_-(k,\ell )\hvpu(k-\ell, \tau) \hvpu(\ell, \tau) \, d\ell 
=0\, , \qquad k<0\, ,
\end{array}
\end{equation}
where we have denoted
\begin{multline}\label{Lameno}
\ds \La_-(k,\ell ) = \ell \frac{|k-\ell | (|k-\ell| -|\ell | ) - (k-\ell ) |\ell | + |k-\ell | \ell }{|k-\ell |  +|k| +|\ell |  } +
\frac{ (k-\ell )  |\ell |(|\ell | + \ell  ) } {|k| +|\ell |  } - (k-\ell ) |\ell | \\
+ \sla 
\Big\{ -k |\ell |
-\frac1{2} \big( (k +\ell  ) \ell - |k-\ell | |\ell |  \big) \Big\} \, .
\end{multline}
Given the solution $\hvpu$ of \eqref{solvability2} we compute $\hat U^{(1)}, \hat V^{(1)}$ from \eqref{sol12}, \eqref{sol22}. From system \eqref{algebra4} we may get $b_0,\b'$ in terms of an arbitrary second order wave profile $\hvpd$, and in turn $\hat U^{(2)}, \hat V^{(2)}$ from \eqref{formU2}, \eqref{equb}, \eqref{solS}, \eqref{L1hp} and \eqref{sol222}--\eqref{detaP}.
Also for $k<0$ the wave profile $\hvpd$ should be determined by considering higher order terms of the asymptotic expansion, see \cite{marcou}.


\subsection{The kernel}

The equations \eqref{solvability1}, \eqref{solvability2} can be written in more compact form as
\begin{equation}
\begin{array}{ll}\label{solvability3}
\ds  \left( 2 \frac{\la -v^0_1}{d} + \frac{\nu^2\la}{\sla^2} \right)\, \hvpu_\tau(k,\tau) + i \intR \La_0(k,\ell )\hvpu(k-\ell, \tau) \hvpu(\ell, \tau) \, d\ell 
=0\, , \qquad \forall \, k\not=0\, ,
\end{array}
\end{equation}
with
\begin{equation*}
\begin{array}{ll}\label{}
\La_0(k,\ell )=\La_{01}(k,\ell )+\La_{02}(k,\ell ),
\end{array}
\end{equation*}
\begin{equation*}
\begin{array}{ll}\label{}
\ds  \La_{01}(k,\ell )= \sgn(k) \Big\{  
\ell \frac{  (k-\ell ) |\ell | - |k-\ell | \ell }{|k-\ell |  +|k| +|\ell |  } -
\frac{ (k-\ell )  |\ell | \ell   } {|k| +|\ell |  }  
 + \frac\sla {2} 
 \big( (k +\ell  ) \ell - |k-\ell | |\ell |  \big) \Big\} \, ,

\end{array}
\end{equation*}

\begin{equation*}
\ds \La_{02}(k,\ell ) = \ell \frac{|k-\ell | (|k-\ell| -|\ell | )  }{|k-\ell |  +|k| +|\ell |  } +
\frac{ (k-\ell ) \ell^2 } {|k| +|\ell |  } - (k-\ell ) |\ell | 
- \sla 
k |\ell |
\, .
\end{equation*}
The kernel $ \La_{01}(k,\ell )$ can also be written as
\[
 \La_{01}(k,\ell )=   \sgn(k) \, \tilde\La_{01}(k-\ell,\ell ),
\]
where
\begin{equation*}
\begin{array}{ll}\label{}
\ds  \tilde\La_{01}(k,\ell )= 
 \ell\, \frac{ k|\ell| -|k| \ell  }{|k |  +|k+\ell | +|\ell |  }   - \frac{ k\ell   |\ell | } {|k+\ell | +|\ell |  } + 
\frac{\sla}{2} \Big( (k +2 \ell ) \ell -|k\ell |\Big)
\, .
\end{array}
\end{equation*}
On the other hand, the kernel $ \La_{02}(k,\ell )$ can also be written as
\[
 \La_{02}(k,\ell )=   \sgn(k) \, \tilde\La_{02}(k-\ell,\ell ),
\]
where
\begin{equation*}
\ds  \tilde\La_{02}(k,\ell )= \sgn(k+\ell ) \, \Big\{
 \, \frac{|k | \ell(|k| -|\ell |   ) }{|k+\ell  |  +|k| +|\ell |  } +
\frac{ k\ell^2 } {|k+\ell | +|\ell |  } -k|\ell | - \sla(k+\l)|\l|
\Big\}\, .
\end{equation*}
Moreover, the kernel $\tilde\La_{01}(k,\ell )+\tilde\La_{02}(k,\ell )$ can be equivalently replaced in the integral equation \eqref{solvability3} by the symmetrized kernel
\begin{equation}
\label{kernel2}
\tilde\La(k,\ell )=\frac1{2}\left(\tilde\La_{01}(k,\ell )+\tilde\La_{01}(\ell,k )+\tilde\La_{02}(k,\ell )+\tilde\La_{02}(\ell,k ) \right),
\end{equation}
because the antisymmetric part of $\tilde\La_{01}+\tilde\La_{02}$ gives a vanishing integral.
Thus we can write \eqref{solvability3} as
\begin{equation}
\begin{array}{ll}\label{solvability4}
\ds  \left( 2 \frac{\la -v^0_1}{d} + \frac{\nu^2\la}{\sla^2} \right)\,  \hvpu_\tau(k,\tau) + i\,\sgn(k) \intR \tilde\La(k-\ell,\ell )\,\hvpu(k-\ell, \tau)\, \hvpu(\ell, \tau) \, d\ell 
=0\, , \qquad \forall \, k\not=0\, ,
\end{array}
\end{equation}
where the kernel $\tilde\La$ in \eqref{solvability4} is explicitly given by
\begin{multline}\label{kernel}
\ds
\tilde\La(k,\ell)= \frac1{2}\Big\{  \frac{(k-\ell)(|k|\ell - k|\ell |)}{|k+\l|+|k|+|\l|} - \frac{k\ell |\l|}{|k+\l|+|l|} - \frac{|k|k\ell}{|k+\l|+|k|} 
 +
 \sla\Big( k^2+\l^2+k\l-|k\ell|
 \Big)
\Big\}
\\
\ds+ \frac1{2}\sgn(k+\l)\Big\{  \frac{(|k|-|\ell |)(|k|\ell - k|\ell |)}{|k+\l|+|k|+|\l|}  
 + \frac{k\ell^2}{|k+\l|+|\l|} + \frac{k^2\ell}{|k+\l|+|k|} -k |\l | - |k|\l - \sla (k+\l)( |k|+|\l|) 
\Big\}.
\end{multline}
%
%
%
%
%
%
%
%
%
First of all we verify that the kernel $\tilde\La$ satisfies the following properties
\begin{equation}
\begin{array}{ll}\label{properties}
\tilde\La(k,\ell)=\tilde\La(\ell,k) &\qquad \mbox{(symmetry)},\\
\tilde\La(k,\ell)=\overline{\tilde\La(-k,-\ell)} &\qquad \mbox{(reality)},\\
\tilde\La(\a k,\a\ell)=\a^2\tilde\La(k,\ell) \qquad\forall\,\a>0&\qquad \mbox{(homogeneity)}.
\end{array}
\end{equation}
Considering some particular cases we can considerably simplify $\tilde\La$ as follows
\begin{equation}\label{simpler}
\tilde\La(k,\ell)=\begin{cases}
\ds - (1+\sla)k\ell   &\qquad \mbox{if } k>0,\,\ell>0\, ,\\
\ds  (1+\sla) \ell(k+\l) &\qquad \mbox{if } k+\ell>0,\, \ell<0 \, ,
\end{cases}
\end{equation}
where the values of $\tilde\La$ in other regions of the $(k,\l)$-plane follow from \eqref{properties}, \eqref{simpler}.
$\tilde\La$ can be written in different way as
\begin{equation*}
\begin{array}{ll}\label{lambda1}
\ds \tilde\La(k,\ell)=-(1+\sla) \frac{2|k+\l|\, |k|\, |\l|}{|k+\l|+|k|+|\l|}\,.
\end{array}
\end{equation*}
After an appropriate rescaling in time, we write \eqref{solvability4}, \eqref{simpler} as
\begin{equation}
\begin{array}{ll}\label{solvability5}
\ds    \hvpu_\tau(k,\tau) + i\,\sgn(k) \intR \La(k-\ell,\ell )\,\hvpu(k-\ell, \tau)\, \hvpu(\ell, \tau) \, d\ell 
=0\, , \qquad \forall \, k\not=0\, ,
\end{array}
\end{equation}
with the new kernel $\La$ defined by
\begin{equation}
\begin{array}{ll}\label{lambda}
\ds \La(k,\ell)= \frac{2|k+\l|\, |k|\, |\l|}{|k+\l|+|k|+|\l|}\,.
\end{array}
\end{equation}
Equation \eqref{solvability5}, \eqref{lambda} is well-known as it coincides with the amplitude equation for nonlinear Rayleigh waves \cite{hamilton-et-al} and describes the propagation of surface waves on a tangential discontinuity (current-vortex sheet) in incompressible MHD \cite{ali-hunter}.
The spacial form of \eqref{solvability5}, \eqref{lambda} is, see \cite{ali-hunter-parker,hamilton-et-al}
\begin{equation*}
\begin{array}{ll}\label{}
\vphi^{(1)}_\tau+\frac12 \HH[\Phi^2]_{\theta\theta} + \Phi\vphi^{(1)}_{\theta\theta}=0, \qquad \Phi=\HH[\vphi^{(1)}] \, ,
\end{array}
\end{equation*}
where $\HH$ denotes the Hilbert transform. After renaming of variables it becomes \eqref{equ1}, \eqref{equ1bis}.


$\La$ is perhaps the simplest kernel arising for surface waves. It satisfies the properties
\begin{subequations}\label{hamiltonians}
\begin{align}
\La(k,\ell)=\La(\ell,k) &\qquad \mbox{(symmetry)},\label{symmetry}\\
\La(k,\ell)=\overline{\La(-k,-\ell)} &\qquad \mbox{(reality)},\label{reality}\\
\La(\a k,\a\ell)=\a^2\La(k,\ell) \qquad\forall\,\a>0&\qquad \mbox{(homogeneity)},\label{homogeneity}\\
\La(k+\l,-\ell)=\, \overline{\La(k,\ell)} \qquad\, \forall k,\l\in\R&\qquad \mbox{(Hamiltonian)}.\label{hamiltonian}
\end{align}
\end{subequations}
The value 2 of the scaling exponent in \eqref{homogeneity} is consistent with the dimensional analysis in \cite{ali-hunter-parker} for surface waves. It is shown by Al\`i et al. \cite{ali-hunter-parker} that \eqref{hamiltonian} is a sufficient condition for \eqref{solvability4}, in addition to \eqref{symmetry}, \eqref{reality}, to admit a {\it Hamiltonian structure}, see also \cite{hamilton-et-al,hunter}. 
Other results on equations of the form \eqref{solvability5} are in the papers \cite{benzoni-JFC, benzoni-JFC-tzvetkov, benzoni-rosini, hunter11, hunter-thoo,marcou}.


The results of Sections \ref{asymptotic} to \ref{second order} are summarized in the following theorem.
\begin{theorem}\label{summa}
Assume that $v^0_1,B^0_1,H^0_1$ are as in (3) or (4) of Lemma \ref{lemma1}, and let $\la$ be a real root of \eqref{condlamda2}. Then the solution $U=(\v,\B,q)^T,\ V=(\H,E)^T$, $\vphi$ of \eqref{mhd2}, \eqref{eq:Maxwell2}, \eqref{bc} admits the asymptotic expansion \eqref{expansion}, where the first order terms of the expansion are defined in \eqref{sol12}, \eqref{sol22}, and the second order terms are found from
\eqref{formU2}, \eqref{equa}, \eqref{equb}, \eqref{solS}, \eqref{L1hp} and \eqref{sol221}--\eqref{detaP}. The location of the plasma-vacuum interface is given by 
\[
x_2=\eps\vphi^{(1)}(x_1-\la t,\eps t)+O(\eps^2),
\]
 as $\eps\to0$, with $t=O(\eps^{-1})$ and $\la$ the linearized phase velocity of the surface wave. The Fourier transform of the leading order perturbation $\varphi^{(1)}(\theta,\tau)$ satisfies the amplitude equation \eqref{solvability5}, \eqref{lambda}.

\end{theorem}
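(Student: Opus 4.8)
The assertion collects the constructions of Sections \ref{asymptotic}--\ref{second order}, so the plan is to assemble those pieces and to check that the solvability conditions encountered along the way are met. First I would recall that inserting the ansatz \eqref{expansion}, together with the expansion of $\p_t,\p_{x_1},\p_{x_2}$ written in the stretched comoving variables $\th=x_1-\la t$, $\eta=x_2-\eps\vphi$, $\tau=\eps t$, into \eqref{mhd2}, \eqref{eq:Maxwell2} and the jump conditions \eqref{bc}, and then Taylor expanding in $\eps$ and equating the coefficients of $\eps^1$ and $\eps^2$ to zero, produces precisely the hierarchy \eqref{equ1order}--\eqref{bc1order} at first order and \eqref{equ2order}--\eqref{bc2order} at second order. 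Thus the theorem reduces to solving these two linear problems in succession and to extracting the solvability condition generated by the second one.

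For the first order I would Fourier transform in $\th$ to obtain \eqref{equ1orderFourier}--\eqref{bc1orderFourier}. The interior systems are constant-coefficient ODEs in $\eta$ whose solutions decaying as $\eta\to+\infty$, resp. $\eta\to-\infty$, are \eqref{sol1}, resp. \eqref{genersol}--\eqref{sol2}; the decay at $\eta\to-\infty$ is what forces the restriction $\nu|\la|<1$ of \eqref{condlambda}. Imposing the jump conditions at $\eta=0$ then yields the homogeneous algebraic systems \eqref{systemjump}, \eqref{systemjump2}, which admit a nontrivial solution exactly when $\la$ obeys the dispersion relation \eqref{condlamda2}; that a real root with $|\la|<1/\nu$ exists under the hypotheses is the content of cases (3)--(4) of Lemma \ref{lemma1}, and the observation following that lemma gives $d\neq 0$. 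This produces the first order solution \eqref{sol12}, \eqref{sol22}, parametrized by the still-undetermined profile $\hvpu(k,\tau)$.

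For the second order the right-hand sides $p_i,p_i',r_i$ are explicit, being quadratic in $\hvpu$ plus a linear term in $\hvpu_\tau$, and I would first solve the interior problems. In the plasma region, writing $\hat U^{(2)}=\Sb+a\Rb+b\overline{\Rb}$ as in \eqref{formU2} and left-multiplying \eqref{46} by the left eigenvector $\Lb$ normalized by \eqref{Lnorm}, by $\overline{\Lb}$, and by the auxiliary vectors $\Lb_j$ of \eqref{equLj}, decouples the problem into the scalar ODEs solved in \eqref{equa}, \eqref{equb} and the algebraic formula \eqref{solS} for $\Sb$; the decay condition \eqref{decay4} then fixes $a_0$ for $k<0$ and $b_0$ for $k>0$ through \eqref{deta0}, \eqref{detb0}, leaving $a_0$ for $k>0$ and $b_0$ for $k<0$ still free. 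In the vacuum region, the inhomogeneous scalar equation \eqref{equE2} is solved by variation of parameters with the decay condition \eqref{decay5}, giving \eqref{sol221}, \eqref{sol222}. Substituting all of this into \eqref{bc2orderFourier} produces the linear systems \eqref{algebra1} for $k>0$ and \eqref{algebra4} for $k<0$ in the unknowns $(a_0,\a',k\hvpd)$, resp. $(b_0,\b',k\hvpd)$. Here I would use the identities $\hat r'_1=\hat r'_2$, $\hat r'_5=\nu\la\,\hat r'_3$ (and, for $k<0$, $\hat r''_1=\hat r''_2$), which make the first and last rows redundant, to reduce the systems to the $3\times3$ form \eqref{algebra3}, whose matrix is singular --- its determinant vanishes by \eqref{condlamda2} --- but has rank exactly $2$; solvability is therefore equivalent to the single scalar condition \eqref{solvability0} and its $k<0$ counterpart. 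Expanding these out gives \eqref{solvability1}, \eqref{solvability2} with the kernels \eqref{Lapiu}, \eqref{Lameno}, which merge into \eqref{solvability3}; symmetrizing the kernel as in \eqref{kernel2}, simplifying it case by case to \eqref{simpler}, and rescaling $\tau$ then yield the amplitude equation \eqref{solvability5}, \eqref{lambda}. Conversely, once $\hvpu$ solves \eqref{solvability5} the first order fields are fully determined, the reduced system becomes solvable and returns $a_0,\a'$, resp. $b_0,\b'$, in terms of an arbitrary second order profile $\hvpd$, hence $\hat U^{(2)},\hat V^{(2)}$ through the listed formulas \eqref{formU2}, \eqref{equa}, \eqref{equb}, \eqref{solS}, \eqref{L1hp} and \eqref{sol221}--\eqref{detaP}; the interface location is finally read off from $x_2=\eps\vphi$.

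The routine but delicate part --- and where I expect to spend the most effort --- is the explicit algebra attached to the second order jump conditions: checking the redundancies $\hat r'_1=\hat r'_2$, $\hat r'_5=\nu\la\,\hat r'_3$ and that the matrix in \eqref{algebra3} has rank exactly $2$, so that one scalar condition is both necessary and sufficient; verifying that the coefficient $2(\la-v^0_1)/d+\nu^2\la/\sla^2$ multiplying $\hvpu_\tau$ does not vanish, so that \eqref{solvability5} is a genuine evolution equation; and carrying out the kernel manipulations that collapse the cumbersome expression \eqref{kernel} for $\tilde\La$ to the simple Hamiltonian kernel \eqref{lambda}. All of these steps rely essentially on the dispersion relation \eqref{condlamda2} and on $d\neq 0$ under the hypotheses of Lemma \ref{lemma1}.
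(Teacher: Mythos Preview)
Your proposal is correct and follows exactly the route taken in the paper: Theorem~\ref{summa} is explicitly presented there as a summary of the constructions carried out in Sections~\ref{asymptotic}--\ref{second order}, with no separate proof, and your outline faithfully reproduces that chain of reductions (ansatz $\to$ hierarchy $\to$ first-order solution via \eqref{systemjump}, \eqref{systemjump2} and Lemma~\ref{lemma1} $\to$ second-order interior solutions $\to$ singular jump system \eqref{algebra3}/\eqref{algebra4} $\to$ solvability condition $\to$ kernel simplification to \eqref{lambda}). The extra checks you flag (nonvanishing of the coefficient of $\hvpu_\tau$, rank exactly $2$) are not spelled out in the paper either but are indeed the points where care is needed.
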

We wish to stress that for the existence of surface waves propagating on the plasma-vacuum interface, it is necessary to have a real root $\la$ of \eqref{condlamda2} satisfying \eqref{condlambda}. This is obtained if the basic state $v^0_1,B^0_1,H^0_1$ is as in (3) or (4) of Lemma \ref{lemma1}.

\section{Noncanonical variables and well-posedness}\label{noncanonical}

As in \cite{hunter2} we introduce the noncanonical dependent variable $\psi(\theta,\tau)$ defined by
\begin{equation*}
\begin{array}{ll}\label{}
\psi(\theta,\tau)=|\p_\th|^{1/2}\varphi^{(1)}(\theta,\tau), \qquad  \hat\psi(k,\tau)=|k|^{1/2}\hvpu(k,\tau).
\end{array}
\end{equation*}
Then rewriting equation \eqref{solvability5} in terms of $\psi$ gives
\begin{equation}
\begin{array}{ll}\label{solvability6}
\ds    \hat\psi_\tau(k,\tau) + i\, k \intR S(k-\ell,\ell )\, \hat\psi(k-\ell, \tau)\,  \hat\psi(\ell, \tau) \, d\ell 
=0\, , \qquad \forall \, k\not=0\, ,
\end{array}
\end{equation}
with kernel $S$ given by 
\begin{equation}
\begin{array}{ll}\label{noncanonical2}
\ds S(k,\ell)=\frac{\La(k,\ell)}{|k\l(k+\l)|^{1/2}}.
\end{array}
\end{equation}
We extend the definition of $S$ by setting
\begin{equation}
\label{defS0}
S(k,\ell)=0 \qquad \mbox{ if }\; k\l=0\,.
\end{equation}
$S$ obviously satisfies
\begin{subequations}\label{properties3}
\begin{align}
S(k,\ell)=S(\ell,k) &\qquad \mbox{(symmetry)},\label{properties2a}\\
S(k,\ell)=\overline{S(-k,-\ell)} &\qquad \mbox{(reality)},\label{properties2b}\\
S(\a k,\a\ell)=\a^{1/2}S(k,\ell) \qquad\forall\,\a>0&\qquad \mbox{(homogeneity)},  \label{properties2c}\\
S(k+\l,-\ell)=\, \overline{S(k,\ell)} \qquad\, \forall k,\l\in\R&\qquad \mbox{(Hamiltonian)}.\label{hamiltonian2}
\end{align}
\end{subequations}
The corresponding spatial form of \eqref{solvability6} is 
\begin{equation}
\begin{array}{ll}\label{spatial}
\p_\tau\psi+\p_\th a(\psi,\psi)=0\, ,
\end{array}
\end{equation}
where the bilinear form $a$ is defined by
\begin{equation}
\begin{array}{ll}\label{bilinear}
\ds  \widehat{a(\psi,\phi)}(k,\tau)=\intR S(k-\ell,\ell )\, \hat\psi(k-\ell, \tau)\,  \hat\phi(\ell, \tau) \, d\ell .
\end{array}
\end{equation}
\eqref{spatial} has the form of a nonlocal Burgers equation, like (2.8) in \cite{hunter2}, or (1.1) in \cite{benzoni2009}.

We consider the initial value problem for the noncanonical equation \eqref{spatial}, \eqref{bilinear}, supplemented by an initial condition
\begin{equation}
\begin{array}{ll}\label{ic}
\psi(\theta, 0)= \psi_0(\theta).
\end{array}
\end{equation}
The well-posedness of \eqref{spatial}--\eqref{ic} easily follows by adapting the proof of Hunter \cite{hunter2} (given for the periodic setting) to our case.

\begin{theorem}\label{wellp}
For any $\psi_0\in H^s(\R)$, $s>2$, the initial value problem \eqref{spatial}--\eqref{ic} has a unique local solution 
\[
\psi\in C(I;H^s(\R))\cap C^1(I;H^{s-1}(\R))
\]
defined on the time interval $I=(-\tau_\ast,\tau_\ast)$, where
\begin{equation}
\begin{array}{ll}\label{tmax}
\ds \tau_\ast=\frac1{K_s\|\psi_0\|^{1-2/s}_{L^2(\R)}\|\psi_0\|^{2/s}_{H^s(\R)}},
\end{array}
\end{equation} 
for a suitable constant $K_s$.
\end{theorem}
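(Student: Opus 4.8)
The plan is to run a standard energy-method argument for the quasilinear nonlocal Burgers equation \eqref{spatial}, following Hunter \cite{hunter2} but on the whole line rather than the torus. First I would record the key structural properties of the bilinear form $a$. From the homogeneity \eqref{properties2c}, $S$ is homogeneous of degree $1/2$, so $a$ gains half a derivative: writing $|\p_\th|$ for the Fourier multiplier $|k|$, one has the schematic bound $\|a(\psi,\phi)\|_{H^{s}} \lesssim \||\p_\th|^{1/2}(\text{products of }\psi,\phi)\|_{H^{s}}$, which by the fractional Leibniz rule and the algebra property of $H^{s-1}$ for $s>2$ gives $\|\p_\th a(\psi,\phi)\|_{H^{s-1}} \lesssim \|\psi\|_{H^s}\|\phi\|_{H^s}$; more importantly, the commutator estimate $\|[\La^s,a(\psi,\cdot)]\phi\|_{L^2} \lesssim \|\psi\|_{H^s}\|\phi\|_{H^{s}}$ where $\La^s=(1-\p_\th^2)^{s/2}$. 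These are the estimates that make the ``loss'' in $\p_\th a(\psi,\psi)$ harmless in the $H^s$ energy.

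Next I would set up the a priori estimate. Apply $\La^s$ to \eqref{spatial}, pair with $\La^s\psi$ in $L^2(\R)$, and estimate
\[
\frac12\frac{d}{d\tau}\|\psi\|_{H^s}^2 = -\big(\La^s\p_\th a(\psi,\psi),\La^s\psi\big)_{L^2}.
\]
By symmetry of $a$ and the Hamiltonian/reality structure \eqref{properties2b}, \eqref{hamiltonian2} (which is what makes the equation conservative and gives the cancellation of the top-order term $a(\psi,\p_\th\psi)$ analogous to the cancellation displayed in \eqref{equ1bis}), together with the commutator estimate above and integration by parts, the right-hand side is bounded by $C\|\psi\|_{H^s}^3$. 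Hence $\frac{d}{d\tau}\|\psi\|_{H^s} \le C\|\psi\|_{H^s}^2$, giving a local bound on a time interval of length controlled by $\|\psi_0\|_{H^s}^{-1}$. To sharpen this to the scaling-sharp lifespan \eqref{tmax}, I would interpolate: since $a$ is bilinear of derivative order $1/2$, the natural bound is $\|\p_\th a(\psi,\psi)\|_{H^{s-1}}\lesssim \|\psi\|_{L^2}\|\psi\|_{H^s} + (\text{lower})$, or more precisely use $\|\psi\|_{W^{1,\infty}}\lesssim \|\psi\|_{L^2}^{1-2/s}\|\psi\|_{H^s}^{2/s}$ by Gagliardo–Nirenberg, so the differential inequality becomes $\frac{d}{d\tau}\|\psi\|_{H^s} \le K_s\|\psi\|_{W^{1,\infty}}\|\psi\|_{H^s} \le K_s\|\psi\|_{L^2}^{1-2/s}\|\psi\|_{H^s}^{1+2/s}$, combined with conservation of $\|\psi\|_{L^2}$ (from the Hamiltonian structure, i.e. $\frac{d}{d\tau}\|\psi\|_{L^2}^2 = -2(\p_\th a(\psi,\psi),\psi)=0$ after integration by parts and using \eqref{hamiltonian2}). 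Integrating this ODE from $\|\psi_0\|_{H^s}$ yields exactly the existence time $\tau_\ast$ in \eqref{tmax}.

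To construct the solution I would use a standard approximation scheme: regularize \eqref{spatial} by a Fourier cutoff (Friedrichs mollifier) $\p_\tau\psi_n + \p_\th J_n a(J_n\psi_n,J_n\psi_n)=0$, which is an ODE on a finite-dimensional space, solve it by Picard, and observe that $J_n$ preserves the conservation of $\|\cdot\|_{L^2}$ and the $H^s$ a priori estimate uniformly in $n$ (the commutator estimates are uniform in the cutoff). Then the $\psi_n$ are bounded in $C(I;H^s)$, equicontinuous into $H^{s-1}$ via the equation, so by Aubin–Lions / Arzelà–Ascoli a subsequence converges in $C(I;H^{s-1}_{loc})$ to a solution, with $\psi\in L^\infty(I;H^s)$; weak-$*$ continuity plus the energy bound upgrades to $\psi\in C(I;H^s)$ by a Bona–Smith-type argument, and $\psi\in C^1(I;H^{s-1})$ follows from the equation. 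Uniqueness follows by estimating the difference of two solutions in $L^2$: if $w=\psi_1-\psi_2$, then $\p_\tau w + \p_\th\big(a(w,\psi_1)+a(\psi_2,w)\big)=0$, and pairing with $w$ in $L^2$ gives $\frac{d}{d\tau}\|w\|_{L^2}^2 \lesssim (\|\psi_1\|_{H^s}+\|\psi_2\|_{H^s})\|w\|_{L^2}^2$ (using again the half-derivative gain in $a$ and the commutator structure so that no derivative of $w$ is lost), whence Gronwall forces $w\equiv0$.

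The main obstacle is the derivative loss in the term $\p_\th a(\psi,\psi)$: naively this term lies only in $H^{s-3/2}$, not $H^{s-1}$, so the equation is genuinely quasilinear and one cannot close the estimate by brute force. Everything hinges on the two cancellations — first, that the symmetrized kernel's Hamiltonian property \eqref{hamiltonian2} removes the worst contribution when one derivative falls on the ``transported'' factor (this is the whole-line analogue of the cancellation noted after \eqref{equ1bis}), and second, that the remaining terms can be written as commutators $[\La^s, a(\psi,\cdot)]$ which, because $S$ is homogeneous of degree $1/2$, are bounded operators on $L^2$ with norm $\lesssim\|\psi\|_{H^s}$. Verifying these commutator bounds carefully (a paradifferential/Coifman–Meyer argument for the bilinear multiplier $S(k-\ell,\ell)$, exploiting \eqref{properties2a}–\eqref{hamiltonian2}) is the technical heart; once it is in place, the rest is routine and essentially identical to \cite{hunter2}, the only changes being that one works with $H^s(\R)$ instead of $H^s(\mathbb{T})$ and uses the Gagliardo–Nirenberg inequality on $\R$ to obtain the sharp constant in \eqref{tmax}.
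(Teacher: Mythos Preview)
Your overall strategy---energy estimate, $L^2$ conservation from the Hamiltonian structure, interpolation to get the sharp lifespan, and a mollification/compactness construction---is exactly the one the paper follows (adapting Hunter \cite{hunter2}), so the proposal is correct in outline. The implementation, however, differs. The paper works entirely on the Fourier side and does \emph{not} invoke commutator or Coifman--Meyer estimates: the cancellation of the top-order term comes from the \emph{cyclic symmetry} of $S$ (a direct consequence of \eqref{properties2a} and \eqref{hamiltonian2}), which lets one replace the weight $k|k|^{2s}$ in the trilinear integral by the symmetrized difference $k|k|^{2s}-(k-\ell)|k-\ell|^{2s}-\ell|\ell|^{2s}$; this is then controlled by an elementary pointwise inequality (the analogue of Kato--Ponce at the symbol level) together with the explicit bound $|S(k-\ell,\ell)|\le \min\{|k|^{1/2},|k-\ell|^{1/2},|\ell|^{1/2}\}$. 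The resulting estimate closes through $\||k|^{3/2}\hat\psi\|_{L^1(\R)}$ rather than $\|\psi\|_{W^{1,\infty}}$, and the interpolation used is the bespoke Lemma in the appendix (between $\dot H^0$ and $\dot H^s$), which is what produces the exponents $1-2/s$ and $2/s$ in \eqref{tmax}; your Gagliardo--Nirenberg bound on $\|\psi\|_{W^{1,\infty}}$ would give $3/(2s)$ instead of $2/s$ and hence a slightly different (though still scaling-consistent) lifespan. In short, your paradifferential route should work but is heavier machinery than necessary; the paper's Fourier-side symmetrization is shorter, completely elementary, and yields exactly the stated $\tau_\ast$.
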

The well-posedness result of Theorem \ref{wellp} may be easily recast as a similar result for \eqref{solvability5}, \eqref{lambda}.

For the proof we need to introduce the homogeneous space $\dot{H}^s(\R)$,
\[
\dot{H}^s(\R)=\left\{ \psi:\R \to \R\, : \, \intR |k|^{2s} |\hat\psi(k)|^2\, dk <+\infty \right\}\, .
\]
As inner product and norm in $\dot H^s$, we use\footnote{If $\phi$ is real then $\overline{\hat\phi(k)}=\hat\phi(-k)$.}
\[
\langle \psi,\phi \rangle_s= \intR |k|^{2s}\hat\psi(k) \hat\phi(-k)\, dk, \qquad \|\psi\|_s= \left( \intR  |k|^{2s} |\hat\psi(k)|^2dk \right)^{1/2}.
\]
In particular we have
\[
 \|\psi\|_{L^2(\R)}=  \|\psi\|_0= \left( \intR   |\hat\psi(k)|^2dk \right)^{1/2}.
\]
As a norm of $H^s(\R)$ we take
\[
 \|\psi\|_{H^s(\R)}= \left( \intR  \left(1+|k|^{2s}\right) |\hat\psi(k)|^2dk \right)^{1/2}.
\]

\begin{proof}[Proof of Theorem~\ref{wellp}]
We prove an a priori estimate for the solution. The first part of the proof is as in \cite{hunter2}, but we repeat it for the convenience of the reader. Assuming that we have a sufficiently smooth solution $\psi$, from \eqref{spatial}, \eqref{bilinear} we compute for $s\ge0$
\begin{equation}
\begin{array}{ll}\label{prima}
\ds \frac{d\,}{d\tau}\intR |k|^{2s}\hat\psi(k) \hat\psi(-k)\, dk + 2i \iint_{\R^2} k  |k|^{2s}S(k-\ell,\ell )\, \hat\psi(k-\ell, \tau)\,  \hat\psi(\ell, \tau) \,\hat\psi(-k, \tau)\,  d\ell \, dk=0\, .
\end{array}
\end{equation}
By change of variables and the cyclic symmetry of $S(k,\l)$ we prove
\begin{equation}
\begin{array}{ll}\label{cyclic}
\ds 2i \iint_{\R^2} k  |k|^{2s}S(k-\ell,\ell )\, \hat\psi(k-\ell, \tau)\,  \hat\psi(\ell, \tau) \,\hat\psi(-k, \tau)\,  d\ell \, dk
\\=
\ds -2i \iint_{\R^2} (k-\l)  |k-\l|^{2s}S(k-\ell,\ell )\, \hat\psi(k-\ell, \tau)\,  \hat\psi(\ell, \tau) \,\hat\psi(-k, \tau)\,  d\ell \, dk
\\=
\ds -2i \iint_{\R^2} \l  |\l|^{2s}S(k-\ell,\ell )\, \hat\psi(k-\ell, \tau)\,  \hat\psi(\ell, \tau) \,\hat\psi(-k, \tau)\,  d\ell \, dk
\\=
\ds \frac{2i}3 \iint_{\R^2} \left(k |k|^{2s}-(k-\l)  |k-\l|^{2s}- \l  |\l|^{2s} \right) S(k-\ell,\ell )\, \hat\psi(k-\ell, \tau)\,  \hat\psi(\ell, \tau) \,\hat\psi(-k, \tau)\,  d\ell \, dk
\,.
\end{array}
\end{equation}
From the definition \eqref{lambda}, \eqref{noncanonical2} it follows
\begin{equation}
\begin{array}{ll}\label{min}
|S(k-\l,\l)|\le \min\{ |k|^{1/2}, |k-\l|^{1/2}, |\l|^{1/2}\}.
\end{array}
\end{equation}
Assuming $s>0$,  we may apply the estimate (5.2) in \cite{hunter2}
\begin{equation}\label{estmin}
\ds { \left| k |k|^{2s}-(k-\l)  |k-\l|^{2s}- \l  |\l|^{2s} \right| }\\
\leq C_s\left(  |k|^{s} |k-\l|^{s} |\l|+ |k|^{s} |k-\l| |\l|^{s}+ |k| |k-\l|^{s} |\l|^{s} \right).
\end{equation}
From \eqref{prima}--\eqref{estmin}, applying the appropriate bound on each term, we get
\begin{equation}
\begin{array}{ll}\label{}
\ds \left| \frac{d\,}{d\tau}\intR |k|^{2s}\hat\psi(k) \hat\phi(-k)\, dk \right| \le 2C_s
\iint_{\R^2}  |k|^{s} |k-\l|^{s} |\l|^{3/2}\, | \hat\psi(k-\ell, \tau)\,  \hat\psi(\ell, \tau) \,\hat\psi(-k, \tau)|\,  d\ell \, dk\, .
\end{array}
\end{equation}
Using the Cauchy-Schwarz and Young's inequalities gives
\begin{equation}
\begin{array}{ll}\label{98}
\ds \left| \frac{d\,}{d\tau} \|\psi\|_s^2 \right| \le 2C_s
\| |k|^s\hat\psi\|_{L^2(\R)} \, \| (|k|^s\hat\psi) \ast (|\l|^{3/2}\hat\psi) \|_{L^2(\R)}
 \le 2C_s
\|\psi\|_s^2 \, \| |\l|^{3/2}\hat\psi \|_{L^1(\R)}.
\end{array}
\end{equation}
Applying estimate \eqref{interpol} for $p=s-3/2>1/2, q=-3/2$, yields
\begin{equation}
\begin{array}{ll}\label{}
\ds \| |\l|^{3/2}\hat\psi \|_{L^1(\R)} \leq C \|\psi\|_0^{1-2/s} \|\psi\|_{s}^{2/s},
\end{array}
\end{equation}
and substituting in \eqref{98} gives
\begin{equation}
\begin{array}{ll}\label{100}
\ds \left| \frac{d\,}{d\tau} \|\psi\|_s^2 \right| 
 \le 2CC_s
 \|\psi\|_0^{1-2/s} \|\psi\|_{s}^{2+2/s}.
\end{array}
\end{equation}
If $s=0$,  the last equality in \eqref{cyclic} shows that such double integral equals zero. It follows from \eqref{prima} that
\begin{equation}
\begin{array}{ll}\label{seconda}
\ds \frac{d\,}{d\tau}\intR  \hat\psi(k) \hat\psi(-k)\, dk = \frac{d\,}{d\tau} \|\psi \|_0^2 =0\, ,
\end{array}
\end{equation}
which gives $\|\psi(t)\|_0=\|\psi_0\|_0$. Combining with \eqref{100} and simplifying the equation gives
\begin{equation}
\begin{array}{ll}\label{terza}
\ds \left| \frac{d\,}{d\tau} \|\psi\|_{H^s(\R)} \right| 
 \le CC_s
 \|\psi_0\|_0^{1-2/s} \|\psi\|_{H^s(\R)}^{1+2/s}.
\end{array}
\end{equation}
Using Gronwall's inequality, we deduce from \eqref{terza} the bound
\begin{equation}
\begin{array}{ll}\label{stima}
\ds   \|\psi(\cdot,\tau)\|_{H^s(\R)} \le \|\psi_0\|_{H^s(\R)}\left( 1- \frac{2CC_s}{s} \|\psi_0\|^{1-2/s}_{L^2(\R)}\|\psi_0\|^{2/s}_{H^s(\R)}|\tau| \right)^{-s/2},
\end{array}
\end{equation}
for $|\tau|<\tau_\ast$ where $\tau_\ast$ is given by \eqref{tmax}.
Given the a priori estimate \eqref{stima}, the proof proceeds by standard arguments, see \cite{kato, taylor97}.
\end{proof}


From the proof of Theorem  \ref{wellp} we can also obtain the following {\it blow-up criterion}.
\begin{lemma}\label{blow}
Under the assumptions of Theorem \ref{wellp}, if $\psi\in C(0,T;H^s(\R))$ with $0<T<+\infty$ is a solution of \eqref{spatial} such that
\begin{equation}
\begin{array}{ll}\label{bound}
\ds \int_0^T\|\psi(\cdot,\tau) \|^{2/s'}_{s'} d\tau<+\infty
\end{array}
\end{equation}
for some $s'>2$, then $\psi$ is continuable to a solution $\psi\in C(0,T';H^s(\R))$ with $T'>T$.
\end{lemma}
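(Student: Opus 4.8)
The plan is to prove the blow-up criterion by upgrading the a priori estimate derived in the proof of Theorem~\ref{wellp} so that the quantity controlling growth of the $H^s$-norm is exactly the integrable quantity in \eqref{bound}. The standard continuation principle then finishes the argument: a solution on $[0,T)$ which stays bounded in $H^s$ up to $T$ can be restarted at a time close to $T$ and extended past $T$, since the local existence time $\tau_\ast$ in \eqref{tmax} depends only on the $L^2$- and $H^s$-norms of the data (and the $L^2$-norm is conserved by \eqref{seconda}).

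First I would revisit the energy identity \eqref{prima}--\eqref{cyclic}, but instead of bounding the trilinear term by moving all regularity onto two factors and the weight $|\l|^{3/2}$ onto the third, I would distribute the derivatives so that the \emph{lowest-order} factor absorbs the $s'$ worth of derivatives while the other two carry $s$ each. Concretely, using \eqref{min} and \eqref{estmin} together with the symmetry exploited in \eqref{cyclic}, one estimates
\[
\left| \frac{d\,}{d\tau}\|\psi\|_s^2 \right|
\le C_s \iint_{\R^2} |k|^{s}|k-\l|^{s}|\l|^{3/2}\,|\hat\psi(k-\ell)\,\hat\psi(\ell)\,\hat\psi(-k)|\,d\ell\,dk ,
\]
and then, rather than sending $|\l|^{3/2}\hat\psi$ to $L^1$ and interpolating through $\|\psi\|_s$ as in \eqref{98}--\eqref{100}, one keeps $\||\l|^{3/2}\hat\psi\|_{L^1}$ and interpolates it against $\|\psi\|_{s'}$ for the given $s'>2$. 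By the inequality \eqref{interpol} applied with $p=s'-3/2>1/2$, $q=-3/2$ one gets $\||\l|^{3/2}\hat\psi\|_{L^1(\R)} \le C\,\|\psi\|_0^{1-2/s'}\|\psi\|_{s'}^{2/s'}$, whence
\[
\left| \frac{d\,}{d\tau}\|\psi\|_{H^s(\R)}^2 \right|
\le C C_s\,\|\psi_0\|_{L^2(\R)}^{1-2/s'}\,\|\psi\|_{s'}^{2/s'}\,\|\psi\|_{H^s(\R)}^2 ,
\]
using $\|\psi(\tau)\|_0=\|\psi_0\|_0$ from \eqref{seconda}. (One must check that the interpolation \eqref{interpol} requires the full derivative index $s'$ only on the factor that is allowed to be rough; the other two factors, carrying weight $|k|^s$, stay in $L^2=\dot H^0$, so no $H^{s'}$-norm of $\psi$ beyond the stated one appears.)

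The key inequality is then Gronwall applied to $\frac{d}{d\tau}\log\|\psi\|_{H^s(\R)} \le \tfrac12 CC_s\|\psi_0\|_0^{1-2/s'}\|\psi\|_{s'}^{2/s'}$, which gives
\[
\|\psi(\cdot,\tau)\|_{H^s(\R)} \le \|\psi(\cdot,0)\|_{H^s(\R)}\exp\!\left( \tfrac12 CC_s\,\|\psi_0\|_0^{1-2/s'}\int_0^\tau \|\psi(\cdot,\sigma)\|_{s'}^{2/s'}\,d\sigma \right).
\]
Under hypothesis \eqref{bound} the integral on the right is finite for $\tau\uparrow T$, so $\sup_{[0,T)}\|\psi(\cdot,\tau)\|_{H^s(\R)}<+\infty$. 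Finally, the continuation argument: pick $\tau_0<T$ close enough to $T$ that $T-\tau_0$ is smaller than the local existence time furnished by Theorem~\ref{wellp} with initial datum $\psi(\cdot,\tau_0)$ — this existence time is bounded below in terms of $\|\psi(\cdot,\tau_0)\|_{L^2}=\|\psi_0\|_{L^2}$ and $\|\psi(\cdot,\tau_0)\|_{H^s}$, both of which are controlled uniformly by the bound just obtained — and solve forward from $\tau_0$; by uniqueness this solution agrees with $\psi$ on $[\tau_0,T)$ and extends it to $[0,T')$ with $T'>T$. The main obstacle is the bookkeeping in the second paragraph: one has to be sure that the trilinear term can genuinely be controlled with only $2/s'$ powers of $\|\psi\|_{s'}$ and no uncontrolled high-frequency factor, i.e. that the symbol bound $|S|\lesssim \min\{|k|^{1/2},|k-\l|^{1/2},|\l|^{1/2}\}$ together with \eqref{estmin} really lets the roughness land on the single low-frequency leg; once that is checked the rest is the routine interpolation-and-Gronwall machinery already used for Theorem~\ref{wellp}.
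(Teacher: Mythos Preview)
Your proposal is correct and follows essentially the same route as the paper: start from the bound \eqref{98}, apply the interpolation inequality \eqref{interpol} with $p=s'-3/2$, $q=-3/2$ to control $\||\ell|^{3/2}\hat\psi\|_{L^1}$ by $\|\psi\|_0^{1-2/s'}\|\psi\|_{s'}^{2/s'}$, use conservation of $\|\psi\|_0$ from \eqref{seconda}, and conclude by Gronwall. The only difference is that you spell out the continuation-by-restarting argument explicitly, whereas the paper leaves it implicit in the phrase ``gives the thesis''; your caveat about ``bookkeeping'' in the trilinear estimate is not a genuine obstacle, since that step is already contained verbatim in \eqref{98}.
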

\begin{proof}
Applying estimate \eqref{interpol} for $p=s'-3/2>1/2, q=-3/2$, yields
\begin{equation*}
\begin{array}{ll}\label{}
\ds \| |\l|^{3/2}\hat\psi \|_{L^1(\R)} \leq C \|\psi\|_0^{1-2/s'} \|\psi\|_{s'}^{2/s'},
\end{array}
\end{equation*}
and substituting in \eqref{98} gives
\begin{equation*}
\begin{array}{ll}\label{}
\ds \left| \frac{d\,}{d\tau} \|\psi\|_s^2 \right| 
 \le 2CC_s
 \|\psi\|_0^{1-2/s'} \|\psi\|_{s'}^{2/s'} \|\psi\|_{s}^{2}.
\end{array}
\end{equation*}
Combining with \eqref{seconda} we readily obtain
\begin{equation}
\begin{array}{ll}\label{quinta}
\ds \left| \frac{d\,}{d\tau} \|\psi\|_{H^s(\R)}^2 \right| 
 \le CC_s
 \|\psi_0\|_0^{1-2/s'} \|\psi\|_{s'}^{2/s'}\|\psi\|^2_{H^s(\R)}.
\end{array}
\end{equation}
Applying the Gronwall inequality with \eqref{bound} gives the thesis.
\end{proof}

The thesis of Lemma \ref{blow} can also be obtained by directly assuming $|\l|^{3/2}\hat\psi \in L^1((0,T)\times\R)$, instead of \eqref{bound}, as immediately follows from the Gronwall inequality applied to \eqref{98}.
This second blow up criterion is the analogue of that one in \cite{benzoni2009} for a homogeneous kernel of order $1/2$.



\appendix
\section{}\label{appA}
\begin{lemma}\label{}
For all $p,q\in\R$, $q<1/2<p$, there exists a positive constant $C_{p,q}$ such that for all functions $\psi\in \dot{H}^{p+3/2}(\R) \cap \dot{H}^{q+3/2}(\R)$ there holds
\begin{equation}
\begin{array}{ll}\label{interpol}
\ds \| |\l|^{3/2}\hat\psi \|_{L^1(\R)} \leq C_{p,q} \|\psi\|_{q+3/2}^{\frac{p-1/2}{p-q}} \|\psi\|_{p+3/2}^{\frac{1/2-q}{p-q}}
\end{array}
\end{equation}

\end{lemma}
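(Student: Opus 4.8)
The plan is to prove \eqref{interpol} by a single threshold splitting in frequency followed by an optimization over the cutoff. Fix $R>0$ and write
\[
\| |\l|^{3/2}\hat\psi \|_{L^1(\R)}=\int_{|\l|\le R}|\l|^{3/2}|\hat\psi(\l)|\,d\l+\int_{|\l|>R}|\l|^{3/2}|\hat\psi(\l)|\,d\l .
\]
On the low-frequency piece I would factor $|\l|^{3/2}|\hat\psi(\l)|=|\l|^{-q}\cdot|\l|^{q+3/2}|\hat\psi(\l)|$ and apply the Cauchy--Schwarz inequality, obtaining
\[
\int_{|\l|\le R}|\l|^{3/2}|\hat\psi(\l)|\,d\l\le\left(\int_{|\l|\le R}|\l|^{-2q}\,d\l\right)^{1/2}\left(\int_{\R}|\l|^{2(q+3/2)}|\hat\psi(\l)|^2\,d\l\right)^{1/2}.
\]
Here the hypothesis $q<1/2$ guarantees that $\int_{|\l|\le R}|\l|^{-2q}\,d\l=\frac{2}{1-2q}R^{1-2q}$ is finite, so this term is bounded by $C_q\,R^{(1-2q)/2}\|\psi\|_{q+3/2}$. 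Symmetrically, on the high-frequency piece I factor $|\l|^{3/2}|\hat\psi(\l)|=|\l|^{-p}\cdot|\l|^{p+3/2}|\hat\psi(\l)|$ and use Cauchy--Schwarz together with $p>1/2$, which makes $\int_{|\l|>R}|\l|^{-2p}\,d\l=\frac{2}{2p-1}R^{1-2p}$ finite, giving the bound $C_p\,R^{(1-2p)/2}\|\psi\|_{p+3/2}$.

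Combining the two estimates yields, for every $R>0$,
\[
\| |\l|^{3/2}\hat\psi \|_{L^1(\R)}\le C_q\,R^{(1-2q)/2}\,\|\psi\|_{q+3/2}+C_p\,R^{(1-2p)/2}\,\|\psi\|_{p+3/2}.
\]
Since $(1-2q)/2>0$ and $(1-2p)/2<0$, the right-hand side is a sum of a positive and a negative power of $R$; I would then minimize over $R>0$, choosing $R$ so that the two terms balance. With $\alpha=(1-2q)/2$, $\beta=(2p-1)/2$, $A=\|\psi\|_{q+3/2}$, $B=\|\psi\|_{p+3/2}$, the optimal value of $A R^{\alpha}+B R^{-\beta}$ is a constant (depending only on $\alpha,\beta$) times $A^{\beta/(\alpha+\beta)}B^{\alpha/(\alpha+\beta)}$. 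Using $\alpha+\beta=p-q$, one has $\beta/(\alpha+\beta)=\frac{p-1/2}{p-q}$ and $\alpha/(\alpha+\beta)=\frac{1/2-q}{p-q}$, which is exactly the exponent structure in \eqref{interpol}, with $C_{p,q}$ depending only on $p$ and $q$. (Equivalently, instead of the explicit minimization one may apply Young's inequality $ab\le\varepsilon a^{r}+C_\varepsilon b^{r'}$ with $r=(\alpha+\beta)/\beta$, $r'=(\alpha+\beta)/\alpha$ to the two scaled quantities $R^{\alpha}\|\psi\|_{q+3/2}$ and $R^{-\beta}\|\psi\|_{p+3/2}$ and optimize in $\varepsilon$.)

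There is no genuine obstacle in this argument: it is entirely elementary. The only points requiring a little care are the two convergence conditions for the elementary power integrals, which is precisely where the assumptions $q<1/2$ (convergence at the origin) and $p>1/2$ (convergence at infinity) are used, and the bookkeeping of exponents after the optimization, which I verified above matches the statement. If one wishes, the restriction to $\dot H^{p+3/2}\cap\dot H^{q+3/2}$ with both norms finite makes every quantity above well defined, so the estimate is meaningful and the optimization in $R$ is legitimate.
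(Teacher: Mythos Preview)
Your proof is correct and follows essentially the same approach as the paper: a threshold splitting of the frequency integral, Cauchy--Schwarz on each piece with the factorizations $|\l|^{-q}\cdot|\l|^{q+3/2}|\hat\psi|$ and $|\l|^{-p}\cdot|\l|^{p+3/2}|\hat\psi|$, and optimization over the cutoff. The paper chooses the cutoff by equating the two terms, which is the same as your balancing minimization, and your exponent bookkeeping matches.
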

\begin{proof}
For $L>0$, we compute
\begin{multline*}
\ds \| |\l|^{3/2}\hat\psi \|_{L^1(\R)} =  \int_{|\l|\le L} |\l|^{-q} |\l|^{3/2+q}|\hat\psi (\l) |\, d\l + \int_{|\l|\ge L}  |\l|^{-p}|\l|^{3/2+p}|\hat\psi (\l) |\, d\l \\
\le \left(  \int_{|\l|\le L} |\l|^{-2q} \, d\l \right)^{1/2}  \left(  \int_{|\l|\le L} |\l|^{3+2q}|\hat\psi (\l) |^2\, d\l \right)^{1/2}
\\ +
\left(  \int_{|\l|\ge L} |\l|^{-2p} \, d\l \right)^{1/2}  \left(  \int_{|\l|\ge L} |\l|^{3+2p}|\hat\psi (\l) |^2\, d\l \right)^{1/2} 
\\
\le C_qL^{1/2-q}\|\psi\|_{q+3/2} + C_pL^{1/2-p}\|\psi\|_{p+3/2} ,
\end{multline*}
where we have used the assumption $q<1/2<p$. Choosing $L$ such that 
\[C_qL^{1/2-q}\|\psi\|_{q+3/2}= C_pL^{1/2-p}\|\psi\|_{p+3/2}
\]
gives \eqref{interpol}.
\end{proof}



\end{document}